\documentclass[10pt,reqno]{article}

\usepackage[b5paper,top=1.2in,left=0.9in]{geometry}
\usepackage{verbatim}
\usepackage{amssymb}
\usepackage{amsmath}
\usepackage{graphicx}
\usepackage{appendix}
\usepackage{color}
\usepackage{amsthm}
\usepackage{tikz}
\usepackage{ifthen}
\usepackage{float}
\usepackage{pdflscape}
\usepackage{rotating}
\usepackage{hyperref}
\usepackage{courier}
\usepackage{multirow}
\usepackage{stackengine}
\usetikzlibrary{arrows,positioning,decorations.pathmorphing,  decorations.markings}

\renewcommand{\d}{\mathrm{d}}
\newcommand{\D}{\mathrm{D}}

\newtheorem{Thm}{Theorem}[section]
\newtheorem{Lem}[Thm]{Lemma}
\newtheorem{Prop}[Thm]{Proposition}
\newtheorem{Cor}[Thm]{Corollary}

\newtheorem*{MainThm}{Main Theorem}

\theoremstyle{definition}
\newtheorem{Def}[Thm]{Definition}
\newtheorem{Rem}[Thm]{Remark}
\newtheorem{Nota}[Thm]{Notation}
\newtheorem{Ex}[Thm]{Example}

\newtheoremstyle{named}{}{}{\itshape}{}{\bfseries}{.}{.5em}{#1 #3}
\theoremstyle{named}

\def\R{\mathbb{R}}
\def\Q{\mathbb{Q}}

\def\C{\mathbb{C}}
\def\Z{\mathbb{Z}}

\def\fD{\mathfrak{D}}

\def\fb{\mathfrak{b}}
\def\sl{\mathfrak{sl}}
\def\g{\mathfrak{g}}

\def\cC{\mathcal{C}}

\def\cG{\mathcal{G}}

\def\cI{\mathcal{I}}

\def\cP{\mathcal{P}}
\def\cR{\mathcal{R}}

\def\cU{\mathcal{U}}

\def\cX{\mathcal{X}}

\def\a{\alpha}
\def\b{\beta}

\def\c{\gamma}
\def\D{\Delta}

\def\d{\delta}

\def\k{\kappa}

\def\l{\lambda}
\def\L{\Lambda}

\def\s{\sigma}

\def\w{\omega}

\def\sss{\stackanchor{3}{1} }

\def\bx{\mathbf{x}}

\def\be{\mathbf{e}}
\def\bE{\mathbf{E}}
\def\bf{\mathbf{f}}
\def\bF{\mathbf{F}}

\def\bH{\mathbf{H}}
\def\bi{\mathbf{i}}
\def\bj{\mathbf{j}}
\def\bJ{\mathbf{J}}

\def\bK{\mathbf{K}}

\def\bp{\mathbf{p}}
\def\bQ{\mathbf{Q}}

\def\bT{\mathbf{T}}

\def\bx{\mathbf{x}}
\def\bY{\mathbf{Y}}

\def\Id{\mathrm{Id}}

\def\imply{\Longrightarrow}
\def\inj{\hookrightarrow}
\def\corr{\longleftrightarrow}

\def\to{\longrightarrow}

\def\o+{\oplus}
\def\bo+{\bigoplus}
\def\x{\times}
\def\<{\langle}
\def\>{\rangle}
\def\oo{\infty}

\def\^{\wedge}
\def\+{\dagger}

\def\inv{^{-1}}
\def\half{\frac12}
\def\dis{\displaystyle}
\def\dd[#1,#2]{\frac{d#1}{d#2}}
\def\del[#1,#2]{\frac{\partial #1}{\partial #2}}
\def\:{\;:\;}

\def\tab{\;\;\;\;\;\;}

\newcommand{\til}[1]{\widetilde{#1}}
\newcommand{\what}[1]{\widehat{#1}}
\newcommand{\xto}[1]{\xrightarrow{#1}}

\newcommand{\mat}[1]{\begin{pmatrix}#1\end{pmatrix}}

\newcommand{\bin}[1]{\begin{bmatrix}#1\end{bmatrix}}

\newcommand{\case}[2][ll]{\left\{\begin{array}{#1}#2 \\ \end{array}\right.}
\newcommand{\Eq}[1]{\begin{align}#1\end{align}}
\newcommand{\Eqn}[1]{\begin{align*}#1\end{align*}}
\renewcommand{\vec}[1]{\overrightarrow{#1}}
\renewcommand{\over}[1]{\overline{#1}}
\tikzset{->-/.style={decoration={
  markings,
  mark=at position .5 with {\arrow{latex}}},postaction={decorate}}}
\tikzset{
    >=latex
    }
\tikzset{>=latex}
\tikzstyle{vthick}=[line width=1.8pt]

\newcommand\drawpath[2]{%
  \foreach \too [count=\c from 1] in {#1}
  {
  \ifthenelse{\c=1}
  {\xdef\from{\too}}
  {\path (\from) edge [->, #2] (\too);
    \xdef\from{\too}}
  };
}
\begin{document}
\title{Quantum cluster mutations and \\reduced word graphs}

\author{  Ivan Chi-Ho Ip\footnote{
	  Department of Mathematics, Hong Kong University of Science and Technology\newline
	  Email: ivan.ip@ust.hk
          }
}
\maketitle

\numberwithin{equation}{section}

\begin{abstract} We give an algebraic proof of the independence of Coxeter moves involved in the construction of positive representations of split-real quantum groups, thus completing a gap in the original construction. To do this, we propose a new quantized version of Lusztig's Injectivity Lemma in the language of quantum cluster algebra, the proof of which by Tits' Lemma reduces to calculations involving sequences of Coxeter moves forming rank 3 cycles. We give a new, constructive proof of Tits' Lemma, and provide the required explicit computation of the quantum cluster mutations under these rank 3 cycles using certain cluster algebraic tricks via universally Laurent polynomials.
\end{abstract}
\vspace{3mm}

{\small \textbf{Keywords.} quantum group, positive representation, cluster algebra, Coxeter group, reduced word}
\vspace{3mm}

{\small \textbf {2010 Mathematics Subject Classification.} Primary 17B37, 13F60}

\tableofcontents
\section{Introduction}\label{sec:intro}

\emph{Positive representations} were introduced in \cite{FI} to study the representation theory of \emph{split real quantum groups} $\cU_{q}(\g_\R)$ associated to semisimple Lie algebra $\g$, as well as its \emph{modular double} $\cU_{qq^\vee}(\g_\R)$ introduced in \cite{Fa1, Fa2} in the regime where $|q|=1$. These representations are natural generalizations of a special family of representations of $\cU_q(\sl_2(\R))$ classified in \cite{Sch} and studied in detail by Teschner \emph{et al.} \cite{PT1, PT2} from the physics point of view of quantum Liouville theory, which is characterized by the actions of \emph{positive (essentially) self-adjoint operators} on the Hilbert space $L^2(\R)$. 

Based on quantizing the regular action on smooth functions on the totally positive flag variety $(G/B)_{>0}$, we constructed, for the simply-laced cases in \cite{FI, Ip2} and non-simply-laced cases in \cite{Ip3}, a family of irreducible representations $\cP_\l$ of $\cU_{qq^\vee}(\g_\R)$ with the Chevalley generators acting on certain Hilbert space as positive self-adjoint operators, parametrized by $\l\in P_{\R^+}$ in the positive real-span of the dominant weights. 

The construction of $\cP_\l$ depends on a choice of reduced expression $\bi_0$ of the longest element of the Weyl group $W$ of $G$, and a crucial claim is that the representations corresponding to different reduced expression $\bi_0'$ are unitary equivalent, and these equivalence should be independent of the choice of Coxeter moves from $\bi_0$ to $\bi_0'$, see Section \ref{sec:qLT:pos} for more details. In \cite{Ip2}, we claimed that this is due to a quantized version of \emph{Lusztig's Injectivity Lemma} \cite[Proposition 2.7]{Lu}. 

To explain in more elementary terms, consider the $3\x 3$ matrices, where we have the following relation for $a,b,c>0$:
\Eq{
\mat{1&a&0\\0&1&0\\0&0&1}\mat{1&0&0\\0&1&b\\0&0&1}\mat{1&c&0\\0&1&0\\0&0&1}=\mat{1&a+c&ab\\0&1&b\\0&0&1}\in U_{>0}^+
}
where $U_{>0}^+$ denotes the totally positive upper unipotent matrices. This can be rewritten using the notation of root subspaces as
\Eq{
(a,b,c)\in\R_{>0}^3\mapsto x_1(a)x_2(b)x_1(c)\in U_{>0}^+
}
Lusztig's Injectivity Lemma (see Lemma \ref{inj} below) states that this map $\R_{>0}^3\to U_{>0}^+$ is injective. Furthermore, there exists a \emph{Coxeter move} $(1,2,1)\sim (2,1,2)$:
\Eq{
\mat{1&a&0\\0&1&0\\0&0&1}\mat{1&0&0\\0&1&b\\0&0&1}\mat{1&c&0\\0&1&0\\0&0&1}=\mat{1&0&0\\0&1&a'\\0&0&1}\mat{1&b'&0\\0&1&0\\0&0&1}\mat{1&0&0\\0&1&c'\\0&0&1}
}
or equivalently
\Eq{\label{121=212}
x_1(a)x_2(b)x_1(c)=x_2(a')x_1(b')x_2(c')
}
where 
\Eq{
a'=\frac{bc}{a+c},\tab b'=a+c,\tab c'=\frac{ab}{a+c}
}
is an involution between the variables.

The above index can clearly be generalized from $(1,2,1)$ to any reduced words $\bi_0$ for general semisimple Lie types, and sequence of Coxeter moves relating different longest word $\bi_0\sim \bi_0'$. As a consequence of Lusztig's Injectivity Lemma, if we have a sequence of Coxeter moves $\bi_0\sim \bi_0'\sim \cdots\sim \bi_0$ that returns to itself, which by \eqref{121=212} induces a sequence of equalities
\Eq{\label{aiai}
x_{i_1}(a_1)\cdots x_{i_N}(a_N) = x_{i_1'}(a_1')\cdots x_{i_N'}(a_N') =\cdots = x_{i_1}(a_1'')\cdots x_{i_N}(a_N'')\in U_{>0}^+}
then $a_i=a_i''$ for all $i=1,...,N$.

As explained, in \cite{Ip2} we proposed a quantized version of Lusztig's Injectivity Lemma and cited \cite{BZ}, where the variables $a_1,...,a_N$ are non-commutative quantum variables in a quantum torus algebra $\bT_q$, and stated that if $a_i$, $a_i''$ are as in \eqref{aiai}, then they are the same elements in $\bT_q$. However, in a private communication, Linhui Shen kindly pointed out that this reference is not accurate, since the equation \eqref{121=212} is not well-defined for general Lie types, as we do not really have a well-defined notion of ``quantum matrices" other than type $A_n$. Furthermore, the classical proof using the Bruhat decomposition cannot carry over to the quantum case, as the definition of the \emph{quantum Bruhat cells} (which belongs to the dual space) used by \cite{BZ} does not match with the setup of our proposed quantum Lusztig transformation. Therefore, the quantum analogue of the injectivity Lemma is actually unclear and non-trivial. In fact, this result was also assumed implicitly in the construction of the non-simply-laced case \cite{Ip3} without proof. Therefore this posed a gap in the completeness of the general construction of positive representations, which was criticized by Goncharov--Shen in \cite[Section 5.2.6]{GS}. 

In \cite{Ip7}, based on the idea from \cite{FG2, SS1}, a quantum cluster realization of the positive representations of $\cU_q(\g_\R)$ was proposed, and the quantum Lusztig's Injectivity Lemma can be understood as the independence of choice of quantum cluster mutations associated to different Coxeter moves between reduced word $\bi_0\sim \bi_0'$. Unfortunately, again the statement was not explicitly proved since the cluster realization solely relies on the explicit algebraic expressions of the Chevalley generators obtained from \cite{FI, Ip2, Ip3} directly without changes. In Goncharov--Shen \cite{GS}, the positive representations are instead essentially obtained geometrically by canonically quantizing certain regular functions, known as the potential functions, of cluster varieties arising from the moduli spaces of framed $G$-local systems, which do not depend on the choice of reduced expression of $w_0$, and in particular on the choice of Coxeter moves.

In this paper, we attempt to provide an alternative, rigorous treatment to fill the gap of the original construction of the positive representations, using the cluster algebraic realization studied in \cite{Ip7} which will be reviewed in the Preliminaries Section. In brief, given a reduced word $\bi_0$ of the longest element $w_0\in W$ of the Weyl group, we can associate with it a special quantum torus algebra $\bT_q^{\bi_0}$, and for any Coxeter move relating $\bi_0\sim \bi_0'$, there exists a birational involution called the \emph{quantum cluster mutation} $\mu^q:\bT_q^{\bi_0'}\to \bT_q^{\bi_0}$.

The main result (Theorem \ref{mainThm}) of the paper is the following: 
\begin{MainThm}
If $\cC_1, \cC_2: \bi_0\to \bi_0'$ are two sequences of Coxeter moves, and 
$$\mu_{\cC_j}^q: \bT_q^{\bi_0'}\to \bT_q^{\bi_0},\tab j=1,2$$
are the induced compositions of quantum cluster mutations corresponding to the moves $\cC_j$, then $$\mu_{\cC_q}^q=\mu_{\cC_2}^q.$$
\end{MainThm}
As a consequence of this result, we propose in Corollary \ref{qLT} the quantized analogue of Lusztig's Injectivity Lemma by relating the quantum Lusztig's coordinates $a_i$ with the quantum cluster variables $X_i$ of $\bT_q^{\bi_0}$ via a monomial transformation, which completes the gaps in the original construction of the positive representations of $\cU_{q\til{q}}(\g_\R)$.

To stay chronicle to the construction, we will prove the Main Theorem using methods independent of \cite{GS}. This is based on the Tits' Lemma \cite{Tits}, restated in Theorem \ref{Tits}, which reduces the study of the fundamental groupoid of the \emph{reduced words graph} under Coxeter moves, to cycles arising only from rank 3 cases. This reduces the proof of the Main Theorem to checking images of quantum cluster variables under only those sequences of quantum cluster mutations forming rank 3 cycles, and this follows from an application of a well-known result, due to \cite{FG3} and restated in Theorem \ref{Ymut}, of $\s$-periods of quantum tropical $Y$-seed mutations. 

However, both of these results were proved implicitly. In this paper, we wish to be more transparent, and instead give a new, constructive proof of Tits' Lemma, and furthermore provide an explicitly computation of the image of the quantum cluster variables under the required rank $3$ cycles of quantum cluster mutations using several algebraic tricks. We also find it illuminating to demonstrate some general principles in calculation involving quantum cluster mutations, in particular the merit of using \emph{universally Laurent polynomials}.

While the argument and the idea behind the fix mentioned above may be elementary to experts in cluster algebra, unfortunately the setup of the notations and preliminaries required to state our results are quite overwhelming. In order for the paper to be self-contained, we decide to spend a substantial portion of the text to provide explanation of the notations and preliminaries required to state and proof the Main Theorem. This also clarifies, in Section \ref{sec:qLT}, certain relationship between the original construction in \cite{Ip2, Ip3} with the newer language of quantum cluster algebra not realized back then. Particularly important is that we explain the \emph{ad hoc} choice of orientations of Dynkin diagram used in the original construction \cite[Definition 5.3]{Ip2} between the quantum Lusztig's coordinates is nothing but an unimportant choice of arrows between the frozen variables, and therefore unnecessary.

The outline of the paper is as follows. In Section \ref{sec:pre}, we fix notations and provide definitions and main properties of Lusztig's coordinates on totally positive semigroups and their transformations via Coxeter moves, the quantum group $\cU_q(\g)$ and the Drinfeld's double $\fD_q(\g)$, and finally the quantum cluster algebra with the associated quivers required in this paper. In Section \ref{sec:qLT}, we propose a new definition of quantum Lusztig's transformations, and relates our new definition to the one originally proposed in the construction of positive representations \cite{Ip2}, which reduces the problem to proving the Main Theorem. In Section \ref{sec:Tits}, we introduce the terminologies required to state the Tits' Lemma, and in Section \ref{sec:TitsPf},  we give a new constructive proof by induction on the size of the reduced word graphs. Finally in Section \ref{sec:mainPf}, we complete the proof of the Main Theorem by explicitly calculating the quantum cluster mutations forming rank 3 cycles.

\textbf{Acknowledgment.} We thank Linhui Shen for pointing out the logical gap in the original construction and subsequent discussions related to the issue. We also thank Ronald Chun Wai Wong for aiding with the computations in the rank 3 cluster mutations. The author is supported by the Hong Kong RGC General Research Funds ECS \#26303319.

\section{Notations and Preliminaries}\label{sec:pre}
In this section, we recall some terminologies required in this paper, keeping the exposition minimal to what is needed. For example, we do not recall the full construction of the basic quiver, nor the Hopf algebra structures of the quantum groups. For more elaborated details see \cite{Ip2, Ip7}, where the choice of most notations in this paper are based on.
\subsection{Coxeter Moves and Lusztig's Data}\label{sec:pre:cox}
Let $\g$ be a simple\footnote{The results of this paper naturally extends to the semisimple case by taking direct product.} Lie algebra over $\C$. Let $G$ be the real simple Lie group corresponding to the split real form $\g_\R$ of the Lie algebra $\g$, and let $B, B^-$ be two opposite Borel subgroups containing a split real maximal torus $T=B\cap B^-$. Let $U^+\subset B, U^-\subset B^-$ be the corresponding unipotent subgroups.

Let $I$ be the root index of the Dynkin diagram of $\g$ such that
\Eq{
|I|=n=\mathrm{rank}(\g).}
Let $\Phi$ be the set of roots of $\g$, $\Phi_+\subset \Phi$ be the positive roots, and $\D_+=\{\a_i\}_{i\in I}\subset \Phi_+$ be the positive simple roots. Let $W=\<s_i\>_{i\in I}$ be the Weyl group of $\Phi$ generated by the simple reflections $s_i:=s_{\a_i}$.
\begin{Def} Let $(-,-)$ be a $W$-invariant inner product on the root lattice $\Z\Phi$. We define
\Eq{
a_{ij}:=\frac{2(\a_i,\a_j)}{(\a_i,\a_i)},\tab i,j\in I
}
such that $A:=(a_{ij})$ is the \emph{Cartan matrix}. 

We normalize $(-,-)$ as follows: we choose the symmetrization factors (also called the \emph{multipliers}) $d_i\in\Q$ such that for any $i\in I$,
\Eq{\label{di}d_i:=\frac{1}{2}(\a_i,\a_i)=\case{1&\mbox{$i$ is long root or in the simply-laced case,}\\\frac{1}{2}&\mbox{$i$ is short root in type $B,C,F$,}\\\frac{1}{3}&\mbox{$i$ is short root in type $G$,}}}
and $(\a_i,\a_j)=-1$ when $i,j \in I$ are adjacent in the Dynkin diagram, such that
\Eq{
d_ia_{ij}=d_ja_{ji}.
}
\end{Def}
\begin{Def}
Let $w\in W$. We call a sequence\footnote{We may sometimes omit the comma in $\bi$ for clarity.}
\Eq{\bi=(i_1,...,i_m),\tab i_k\in I} a \emph{reduced word} of $w$ if 
$$w=s_{i_1}s_{i_2}\cdots s_{i_m}$$
is a reduced expression. The set of reduced words of $w$ is denoted by $\cI(w)$. 

We let
\Eq{
l(w):=|\bi|:=m
} to be the length function. 

Let $w_0$ be the unique longest element of $W$. Throughout the paper we let 
\Eq{
N:=l(w_0)}
and denote by $\bi_0$ a reduced word of $w_0$.
\end{Def}

Let $J\subset I$ and denote by $W_J\subset W$ the subgroup generated by $\<s_i\>_{i\in J}$.\footnote{By convention $W_I=W$ and $W_{\emptyset}=\{e\}$.}

\begin{Def} Let $i,j\in I$. We denote by $\bp_{ij}$ the reduced word ending with $j$ of the longest element $w_{ij}\in W_{\{i,j\}}$. i.e. 
$$\bp_{ij}=ij,\;\; jij, \;\;ijij, \mbox{ or }ijijij$$
respectively for $a_{ij}a_{ji}=0,1,2$ or $3$. 

A \emph{Coxeter move} $C_{ij}$ of $\bi$ transforms involutively a reduced word $\bi\in\cI(w)$ to another reduced word $\bi'\in\cI(w)$ by the rule
$$C_{ij}:(...,\bp_{ij},...)\corr (...,\bp_{ji},...).$$ 
at certain letter position, keeping the other letters fixed. More explicitly, we have
\Eq{
(...,i,j,....)&\corr (..., j,i,....), & a_{ij}a_{ji}=0\label{Cox0}\\
(..., j,i,j,....)&\corr (..., i,j,i,....), & a_{ij}a_{ji}=1\label{Cox1}\\
(...,i,j,i,j,....)&\corr (..., j,i,j,i,....), & a_{ij}a_{ji}=2\label{Cox2}\\
(i,j,i,j,i,j)&\corr (j,i,j,i,j,i), & a_{ij}a_{ji}=3\label{Cox3}
}
The move \eqref{Cox0} will be referred to as the \emph{commutative Coxeter move}. In this paper we will not need to consider the move \eqref{Cox3}.
\end{Def}

Next we recall Lusztig's definition of the totally positive semigroup. Let $\bi_0=(i_1,...,i_N)$ be a reduced word of $w_0$.
\begin{Def}\label{U+} For any $i\in I$, there exists a homomorphism $SL_2(\R)\to G$ denoted by
\begin{eqnarray}
\mat{1&a\\0&1}&\mapsto& x_i(a)\in U_i^+,\\
\mat{b&0\\0&b\inv}&\mapsto &\chi_i(b)\in T,\\
\mat{1&0\\c&1}&\mapsto &y_i(c)\in U_i^-,
\end{eqnarray}
called the \emph{pinning} of $G$, where $U_i^+$ and $U_i^-$ are the simple root subgroups of the unipotent subgroup $U^+$ and $U^-$ respectively. The \emph{positive unipotent semigroup} $U_{>0}^+$ is defined as the image of the map $\iota:\R_{>0}^N\to U^+$ given by
\Eq{\label{w0coord}\iota:(a_1,a_2,...,a_N)\mapsto x_{i_1}(a_1)x_{i_2}(a_2)...x_{i_N}(a_N).}
\end{Def}
\begin{Lem}\cite[Proposition 2.7]{Lu}\label{inj} The map $\iota:\R_{>0}^N\to U^+$ is injective: if 
\Eq{x_{i_1}(a_1)x_{i_2}(a_2)...x_{i_N}(a_N)=x_{i_1}(a'_1)x_{i_2}(a'_2)...x_{i_N}(a'_N),}
then $a_k=a'_k$ for every $k=1,...,N$.
\end{Lem}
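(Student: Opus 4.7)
The plan is to establish injectivity by exhibiting an explicit inverse: recovering each parameter $a_k$ from the group element $u := \iota(\ba) \in U^+$ via intrinsic regular functions. The key tool is the family of \emph{generalized minors} $\Delta_{u\omega_i, v\omega_i}$ in the sense of Fomin--Zelevinsky, realized as matrix coefficients in the fundamental representations $V_{\omega_i}$; being functions on the group $G$, their values on $u$ are independent of any factorization.

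First, I would compute, for each $k = 1, \ldots, N$, the minor $\Delta_{w_{\le k}\omega_{i_k}, \omega_{i_k}}(u)$, where $w_{\le k} := s_{i_1} \cdots s_{i_k}$, and verify that it evaluates to an explicit polynomial in $a_k, a_{k+1}, \ldots, a_N$ with well-controlled structure. This can be organized inductively by commuting $x_{i_k}(a_k)$ through the earlier factors via the rank-$2$ braid identities, exploiting the fact that $x_i(a)$ fixes the highest weight vector of $V_{\omega_j}$ whenever weight considerations permit, which sharply restricts which monomials in the $a_j$ can appear.

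Next, the \emph{chamber ansatz} of Berenstein--Fomin--Zelevinsky provides explicit Laurent monomial formulas expressing each $a_k$ as a ratio of such generalized minors evaluated at $u$. Since the minors depend only on $u$ and not on any chosen factorization, this uniquely recovers the tuple $(a_1, \ldots, a_N)$ from $u$, which gives the desired injectivity.

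The main obstacle is verifying the explicit polynomial expression of each minor in terms of Lusztig's parameters; this requires nontrivial representation-theoretic combinatorics, typically handled via the PBW basis expansion of $u \in U(\fn^+)$ or via $i$-trails in $V_{\omega_i}$. A more elementary alternative is induction on $N$: extract $a_1$ as a specific matrix coefficient of $u$ acting on $V_{\omega_{i_1}}$, then left-multiply by $x_{i_1}(-a_1)$ to reduce to the shorter reduced word $(i_2, \ldots, i_N)$ of $s_{i_1} w_0$. In either approach, the positivity $a_k > 0$ is essential, since the complexified map $\C^N \to U^+$ is far from injective; positivity ensures the relevant minors are strictly positive and the required Laurent monomial inversions are well-defined.
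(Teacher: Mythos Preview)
Your chamber-ansatz approach is correct and in fact produces an explicit inverse to $\iota$, but it is considerably heavier than the paper's argument. The paper's proof is three lines: left-multiply both sides by $x_{i_1}(-a_1')$ to obtain
\[
x_{i_1}(a_1-a_1')\,x_{i_2}(a_2)\cdots x_{i_N}(a_N) \;=\; x_{i_2}(a_2')\cdots x_{i_N}(a_N'),
\]
and observe that if $a_1\neq a_1'$ the two sides lie in distinct Bruhat cells (the left in $B^- w_0 B^-$, the right in $B^- s_{i_1}w_0 B^-$), a contradiction; then induct. So the paper proves injectivity directly without ever identifying the inverse, whereas you construct the inverse map and read injectivity off as a corollary. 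Your route buys explicit formulas for the factorization parameters, which is genuinely useful in other contexts, but for bare injectivity the Bruhat-cell trick is far more economical and requires no representation-theoretic input. One caution on your ``more elementary alternative'': in general $a_1$ is \emph{not} a single matrix coefficient of $u$ in $V_{\omega_{i_1}}$ (already in type $A_2$ with $\bi_0=(1,2,1)$ the entry $u_{12}$ equals $a_1+a_3$, not $a_1$); recovering $a_1$ really does require a ratio of minors, i.e.\ the chamber ansatz again, so that alternative is not actually simpler than your main proposal.
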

\begin{proof} By moving a root subgroup to the other side we obtain
$$x_{i_1}(a_1-a_1')x_{i_2}(a_2)...x_{i_N}(a_N)=x_{i_2}(a'_2)...x_{i_N}(a'_N),$$
hence both sides belong to different Bruhat cells unless $a_1-a_1'=0$. The claim then follows by induction.
\end{proof}
\begin{Def} We define the totally positive semigroup to be
\Eq{\label{gauss}
G_{>0}:=U_{>0}^- T_{>0} U_{>0}^+,
} where $U_{>0}^\pm$ is as above, and $T_{>0}$ are generated by the images $\chi_i(b)$ with $b\in\R_{>0}$.
\end{Def}
In this paper, we are only interested in the identities related to the Coxeter moves \eqref{Cox1}--\eqref{Cox2}, namely
\begin{Prop} We have the following identities in $U_{>0}^+$: for $a,b,c\in \R_{>0}$ and $i,j\in I$,
\begin{itemize}
\item In the simply-laced case (i.e. $a_{ij}a_{ji}=1$), \eqref{Cox1} corresponds to the identity
\Eq{
\label{121}x_i(a)x_j(b)x_i(c)&=x_j(a')x_i(b')x_j(c')
}
where
\Eq{
a'=\frac{bc}{a+c},\tab b'=a+c,\tab c'=\frac{ab}{a+c}.
}
\item In the doubly-laced case (i.e. $a_{ij}a_{ji}=2$), \eqref{Cox2} corresponds to the identity
\Eq{
\label{1212}x_i(a)x_j(b)x_i(c)x_j(d)&=x_j(b')x_i(a')x_j(d')x_i(c'),
}
where
\Eq{
a'=\frac{S}{R},&& b'=\frac{bc^2d}{S},&&c'=\frac{abc}{R},&& d'=\frac{R^2}{S},
}
and
\Eq{R=ab+ad+cd,\tab S=a^2b+d(a+c)^2.}
\end{itemize}
In both cases, the transformation $a,b,c (,d)\mapsto a',b',c'(,d')$ is an involution.
\end{Prop}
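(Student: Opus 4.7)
The plan is to reduce both identities to explicit computations inside the rank $2$ subgroup generated by the root subgroups $U_i^\pm, U_j^\pm$ and the corresponding part of the torus. Since every factor in \eqref{121} and \eqref{1212} lies in this subgroup, and the pinning of $G$ restricts to the pinning of the rank $2$ Levi, it suffices to verify each identity in the rank $2$ case: inside $SL_3$ when $a_{ij}a_{ji}=1$, and inside $Sp_4$ (or equivalently $Spin_5$) when $a_{ij}a_{ji}=2$, using the standard $3\x 3$ and $4\x 4$ faithful representations respectively.

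For the simply-laced case I would embed the two simple root subgroups into $SL_3$ via $x_i(t) = I + tE_{12}$ and $x_j(t) = I + tE_{23}$. A direct matrix multiplication gives
\Eqn{
x_i(a)x_j(b)x_i(c) = \mat{1 & a+c & ab \\ 0 & 1 & b \\ 0 & 0 & 1},
}
while the analogous computation for the right-hand side gives $x_j(a')x_i(b')x_j(c')$ with $(1,2), (1,3), (2,3)$ entries equal to $b', b'c', a'+c'$ respectively. Matching the two matrices yields the three equations $b' = a+c$, $b'c' = ab$ and $a'+c' = b$, which have the unique positive solution stated in the proposition.

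For the doubly-laced case I would work in the standard symplectic representation of $Sp_4$ on $\R^4$, writing $x_i(t), x_j(t)$ as the explicit $4\x 4$ upper unitriangular matrices attached to the long and short simple root subgroups (adapted so that the symplectic form is preserved). Expanding the product $x_i(a)x_j(b)x_i(c)x_j(d)$ produces a matrix whose strictly upper-triangular entries are polynomials in $a,b,c,d$; matching with the corresponding entries of $x_j(b')x_i(a')x_j(d')x_i(c')$ then produces a system of four polynomial equations. The quantities $R = ab+ad+cd$ and $S = a^2b + d(a+c)^2$ appear as the natural denominators when this system is solved, since they coincide (up to sign) with certain principal minors of the resulting positive matrix; the stated formulas for $a',b',c',d'$ then follow by direct algebra.

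The involution property in each case is a separate algebraic check: apply the transformation twice and verify that $(a', b', c')' = (a, b, c)$, and similarly in the doubly-laced case. In the simply-laced case this is a one-line computation with the explicit formulas. The main obstacle is the bookkeeping for the doubly-laced involution check, which requires substituting $(a', b', c', d')$ into the expressions for $R$ and $S$ and showing that the resulting twice-transformed variables simplify back to $(a,b,c,d)$. Structurally, however, this is already forced: both sides of \eqref{1212} represent two Lusztig parametrizations of the same element of $U_{>0}^+$, so by the injectivity of $\iota$ (Lemma \ref{inj}), applying the Coxeter move and then applying it again returns the original reduced word with the same totally positive semigroup element, and hence with the same Lusztig coordinates. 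This reduces the involution statement to the (already established) uniqueness of the solution to the system above.
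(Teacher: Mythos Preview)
The paper does not give a proof of this Proposition; it is recorded as a known fact (these rank~$2$ transition maps go back to Lusztig \cite{Lu}), so there is nothing to compare against directly.

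Your approach is the standard one and is sound: reduce to the rank~$2$ Levi subgroup, realise the simple root subgroups explicitly in $SL_3$ (simply-laced) or $Sp_4$ (doubly-laced), multiply out, and match entries. The $SL_3$ computation you sketch is correct, and the $Sp_4$ case works the same way with a $4\times 4$ upper-unitriangular realisation; $R$ and $S$ indeed appear as the natural combinations when solving the resulting system.

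For the involution, your injectivity argument via Lemma~\ref{inj} is the right one and is exactly how the paper uses the statement (namely, to conclude that the image of $\R_{>0}^N$ is independent of the reduced word). One caution, however: in the doubly-laced case the displayed formula is tied to the word $(i,j,i,j)$ with $i$ short, and the reverse Coxeter move $(j,i,j,i)\to(i,j,i,j)$ is governed by a syntactically different (inverse) formula. Consequently the ``separate algebraic check'' you propose --- substituting $(a',b',c',d')$ back into the \emph{same} formulas for $R,S$ and recovering $(a,b,c,d)$ --- does not actually succeed (try $a=b=c=d=1$). Your structural argument via injectivity sidesteps this issue and establishes what is really meant: the Coxeter move on Lusztig coordinates, performed twice, returns the original data.
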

In particular since the transformation of the variables is an involution, we conclude that the image of $\R_{>0}^N$ in \eqref{w0coord} is independent of the choice of $\bi_0\in\cI(w_0)$.

\subsection{Quantum Groups $\cU_q(\g)$ and $\fD_q(\g)$}\label{sec:pre:nota}
For any finite dimensional complex semisimple Lie algebra $\g$, Drinfeld \cite{Dr} and Jimbo \cite{Ji1} associated to it a remarkable Hopf algebra $\cU_q(\g)$ known as \emph{quantum group}, which is certain deformation of the universal enveloping algebra. We follow the notations used in \cite{Ip7} for $\cU_q(\g)$ as well as the Drinfeld's double $\fD_q(\g)$ of its Borel part.

In the following, we assume again that $\g$ is of simple Dynkin type.
\begin{Def} \label{qi} Let $d_i$ be the multipliers \eqref{di}. We define
\Eq{
q_i:=q^{d_i},
} which we will also write as
\Eq{
q_l&:=q,\\ 
q_s&:=\case{q^{\frac12}&\mbox{if $\g$ is of type $B, C, F$},\\q^{\frac13}&\mbox{if $\g$ is of type $G$},}
}
for the parameters corresponding to long and short roots respectively.
\end{Def}

\begin{Def}\label{Dgdef} We define $\fD_q(\g)$ to be the $\C(q_s)$-algebra generated by the elements
$$\{\bE_i, \bF_i,\bK_i^{\pm1}, \bK_i'^{\pm1}\}_{i\in I}$$ 
subject to the following relations (including the obvious relations involving $\bK_i^{-1}$ and ${\bK_i'}^{-1}$ which we omit below for simplicity):
\Eq{
\bK_i\bE_j&=q_i^{a_{ij}}\bE_j\bK_i, &\bK_i\bF_j&=q_i^{-a_{ij}}\bF_j\bK_i,\label{KK1}\\
\bK_i'\bE_j&=q_i^{-a_{ij}}\bE_j\bK_i', &\bK_i'\bF_j&=q_i^{a_{ij}}\bF_j\bK_i',\label{KK2}\\
\bK_i\bK_j&=\bK_j\bK_i, &\bK_i'\bK_j'&=\bK_j'\bK_i', &\bK_i\bK_j' = \bK_j'\bK_i,\label{KK3}\\
&&[\bE_i,\bF_j]&= \d_{ij}\frac{\bK_i-\bK_i'}{q_i-q_i\inv},\label{EFFE}
}
together with the \emph{Serre relations} for $i\neq j$:
\Eq{
\sum_{k=0}^{1-a_{ij}}(-1)^k\bin{1-a_{ij}\\k}_q\bE_i^{k}\bE_j\bE_i^{1-a_{ij}-k}&=0,\label{SerreE}\\
\sum_{k=0}^{1-a_{ij}}(-1)^k\bin{1-a_{ij}\\k}_q\bF_i^{k}\bF_j\bF_i^{1-a_{ij}-k}&=0,\label{SerreF}
}
where the \emph{$q$-binomial} is given by
\Eq{
\bin{n\\k}_q:=\frac{[n]_q!}{[k]_q![n-k]_q!}\label{qbin}
} with $\dis [k]_q:=\frac{q^k-q^{-k}}{q-q\inv}$ the \emph{$q$-number} and $\dis [n]_q!:=\prod_{k=1}^n [k]_q$ the \emph{$q$-factorial}. 
\end{Def}

\begin{Def}
The quantum group $\cU_q(\g)$ is defined as the quotient
\Eq{
\cU_g(\g):=\fD_q(\g)/\<\bK_i\bK_i'=1\>_{i\in I},
}
and it inherits a well-defined Hopf algebra structure from $\fD_q(\g)$. 
\end{Def}
\begin{Rem} $\fD_q(\g)$ is the \emph{Drinfeld's double} of the quantum Borel subalgebra $\cU_q(\fb)$ generated by $\bE_i$ and $\bK_i$.
\end{Rem}
\begin{Def}We define the rescaled generators
\Eq{
\be_i &:=\left(\frac{\sqrt{-1}}{q_i-q_i\inv}\right)\inv \bE_i,&\bf_i &:=\left(\frac{\sqrt{-1}}{q_i-q_i\inv}\right)\inv \bF_i.\label{rescaleFF}
}
Then we can also consider $\fD_q(\g)$ as the $\C(q_s)$-algebra generated by 
\Eq{\{\be_i, \bf_i, \bK_i, \bK_i'\}_{i\in I}
} and $\cU_q(\g)$ as the corresponding quotient, where the generators satisfy all the defining relations \eqref{KK1}--\eqref{SerreF} above, except \eqref{EFFE} which is modified to be 
\Eq{\label{EFFE2}
[\be_i, \bf_j]=\d_{ij} (q_i-q_i\inv)(\bK_i'-\bK_i).
}
\end{Def}
\subsection{Quantum Torus Algebra}\label{sec:pre:qta}
In this section we recall some definitions and properties concerning quantum torus algebra and their cluster realizations. One of the main purposes of this section is to fix the normalization used when defining the quantum cluster mutations, which has a vast variety of conventions in the literature \cite{BZ, FG2, GS, KN}.
\begin{Def} A \emph{cluster seed} is a datum 
\Eq{
\bQ=(Q, Q_0, B, D)
} where $Q$ is a finite set, $Q_0\subset Q$ is a subset called the \emph{frozen subset}, $B=(b_{ij})_{i,j\in Q}$ is a skew-symmetrizable $\frac12\Z$-valued matrix called the \emph{exchange matrix}, and $D=\mathrm{diag}(d_j)_{j\in Q}$ is a diagonal $\Q$-matrix called the \emph{multiplier} with $d_j\inv\in \Z$, such that
\Eq{W:=DB=-B^TD} is skew-symmetric. We assume that $b_{ij}\in\Z$ unless both $i,j\in Q_0$. 

The \emph{rank} of $\bQ$ is defined to be the rank of the exchange matrix $B$, or equivalently the rank of the skew-symmetric matrix $W$.
\end{Def}
In the following, we will only consider the case when there exists a \emph{decoration} 
\Eq{\eta:Q\to I
} to the set of simple roots of a simple Dynkin diagram, such that $D=\mathrm{diag}(d_{\eta(j)})_{j\in Q}$ where $(d_i)_{i\in I}$ are the multipliers given in \eqref{di}.

Let $\L_\bQ$ be a $\Z$-lattice with basis $\{\vec{e_i}\}_{i\in Q}$, and let $d=\min_{j\in Q}(d_j)$ \footnote{For general skew-symmetrizable $B$ we define $d=\left(l.c.m.(d_j\inv)_{j\in Q}\right)\inv$.} Also let
\Eq{\label{wij}w_{ij}=d_ib_{ij}=-w_{ij}.}
We define a skew symmetric $d\Z$-valued form $(-,-)$ on $\L_\bQ$ by 
\Eq{\label{biform}
(\vec{e_i}, \vec{e_j}):=w_{ij}.
}

\begin{Def} Let $q$ be a formal parameter. We define the \emph{quantum torus algebra} $\cX_q^{\bQ}$ associated to a cluster seed $\bQ$ to be an associative algebra over $\C[q^d]$ generated by $\{X_i^{\pm 1}\}_{i\in Q}$ subject to the relations
\Eq{\label{XiXj}
X_iX_j=q^{-2w_{ij}}X_jX_i,\tab i,j\in Q.
}
The generators $X_i\in \cX_q^{\bQ}$ are called the \emph{quantum cluster variables}, and they are \emph{frozen} if $i\in Q_0$. We denote by $\bT_q^\bQ$ the non-commutative field of fractions of $\cX_q^\bQ$, which is well defined since $\cX_q^{\bQ}$ is an Noetherian domain \cite{GW}.

Alternatively, $\cX_q^\bQ$ is generated by $\{X_{\l}\}_{\l\in \L_\bQ}$ with $X_0:=1$ subject to the relations
\Eq{q^{(\l,\mu)}X_{\l}X_{\mu} = X_{\l+\mu},\tab \mu,\l\in\L_\bQ.}
\end{Def}
The merit of this notation is that if we define a star structure on the quantum cluster variables by
\Eq{
X_i^*&:=X_i,\tab i\in Q \label{star}\\
q^*&:=q\inv
}
then for any $\l\in\L_\Q$ the generator $X_\l$ is self-adjoint under the anti-involution $*$.
\begin{Nota}\label{xik}
Comparing both realizations of $\cX_q^{\bQ}$, we shall identify 
\Eq{
X_i=X_{\vec{e_i}},
} and define the notation
\Eq{X_{i_1,...,i_k}:=X_{\vec{e_{i_1}}+...+\vec{e_{i_k}}},}
or more generally for $n_1,...,n_k\in \Z$,
\Eq{X_{i_1^{n_1},...,i_k^{n_k}}:=X_{n_1\vec{e_{i_1}}+...+n_k\vec{e_{i_k}}}.}
\end{Nota}

\begin{Def}\label{quiver} 
We associate to each cluster seed $\bQ=(Q,Q_0,B,D)$ with decoration $\eta$ a quiver, denoted again by $\bQ$, with vertices labeled by $Q$ and adjacency matrix $C=(c_{ij})_{i,j\in Q}$, where
\Eq{
c_{ij}=\case{b_{ij}&\mbox{if $d_i=d_j,$}\\w_{ij}&\mbox{otherwise.}}
}
We call $i\in Q$ a \emph{short} (resp. \emph{long}) \emph{node} if $q_i:=q^{d_i}=q_s$ (resp. $q_i=q_l=q$). An arrow $i\to j$ represents the algebraic relation in $\cX_q^\bQ$
\Eq{\label{XiXj2}
X_iX_j=q_*^{-2}X_jX_i} where $q_*=q_s$ if both $i,j$ are short nodes, or $q_*=q$ otherwise.
\end{Def}
Obviously one can recover the cluster seed from the quiver and the multipliers.
\begin{Nota}\label{arrows}
We will use squares to denote frozen nodes $i\in Q_0$ and circles otherwise. We will also use dashed arrows if $c_{ij}=\half$, which only occur between frozen nodes. 

We will represent the algebraic relations \eqref{XiXj2} by thick or thin arrows (see Figure \ref{thick}) for display conveniences (thickness is \emph{not} part of the data of the quiver). Thin arrows only occur in the non-simply-laced case between two short nodes.
\end{Nota}
   
\begin{figure}[H]
\centering
  \begin{tikzpicture}[every node/.style={inner sep=0, minimum size=0.5cm, thick}, x=1cm, y=0.8cm]
\node[draw,circle] (i1) at(0,4) {$i$};
  \node[draw,circle] (j1) at (3,4) {$j$};
   \draw[vthick, ->](i1) to (j1);
\node at (6,4) {$X_iX_j = q^{-2} X_j X_i$}; 
\node[draw] (i2) at(0,3) {$i$};
  \node[draw] (j2) at (3,3) {$j$};
   \draw[vthick, dashed, ->](i2) to (j2);
\node at (6,3) {$X_iX_j = q^{-1} X_j X_i$};
\node[draw,circle] (i3) at(0,2) {$i$};
  \node[draw,circle] (j3) at (3,2) {$j$};
   \draw[thin, ->](i3) to (j3);
\node at (6,2) {$X_iX_j = q_s^{-2} X_j X_i$};
\node[draw] (i4) at(0,1) {$i$};
  \node[draw] (j4) at (3,1) {$j$};
   \draw[thin,dashed, ->](i4) to (j4);
\node at (6,1) {$X_iX_j = q_s^{-1} X_j X_i$};
  \end{tikzpicture}
  \caption{Arrows between nodes and their algebraic meaning.}\label{thick}
  \end{figure}
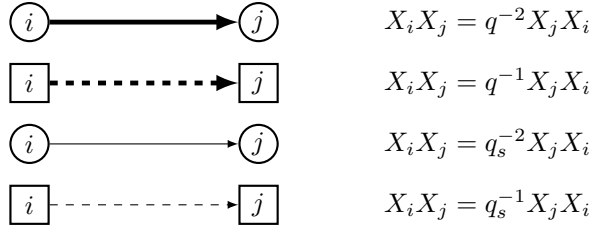

Next we recall two operations on cluster seeds and their induced actions on the quantum torus algebra. Let $\bQ=(Q,Q_0,B, D)$ and $\bQ'=(Q',Q_0',B', D')$ denote two cluster seeds with the same vertex sets $Q=Q'$ and $Q_0=Q_0'$. 
\begin{Def} \label{perm} A \emph{permutation} of a cluster seed $\s:\bQ\to \bQ'$ is a bijection $\s:Q\to Q'$ such that
\Eq{
\s(i)&=i,\tab\forall i\in Q_0,\nonumber\\
b_{ij}'&=b_{\s(i)\s(j)},\\
d_i'&=d_{\s(i)}.\nonumber
}
It induces an isomorphism $\s^*: \bT_q^{\bQ'}\to \bT_q^{\bQ}$ by
\Eq{
\s^*(X_{\s(i)}') := X_i
}
where we denote by $X_i'$ the quantum cluster variables of $\bT_q^{\bQ'}$.
\end{Def}
\begin{Def}\label{qmut} A \emph{cluster mutation in direction $k$}, $\mu_k:\bQ\to \bQ'$ is a bijection $\mu_k:Q\to Q'$ such that $Q_0'=Q_0$ and
\Eq{
b'_{ij} &= \case{-b_{ij}&\mbox{if $i=k$ or $j=k$},\\ b_{ij}+\frac{b_{ik}|b_{kj}|+|b_{ik}|b_{kj}}{2}&\mbox{otherwise},}\\
d'_{i}&=d_i.\nonumber
}
\end{Def}
\begin{Def}
A cluster mutation $\mu_k:\bQ\to\bQ'$ induces an isomorphism $\mu_k^q:\bT_q^{\bQ'}\to \bT_q^{\bQ}$ called the \emph{quantum cluster mutation}, defined by
\Eq{\label{qcm}
\mu_k^q(X_i')=\case{X_k\inv&\mbox{if $i=k$},\\ \dis X_i\prod_{r=1}^{|b_{ki}|}(1+q_k^{2r-1}X_k)&\mbox{if $i\neq k$ and $b_{ki}\leq 0$},\\\dis X_i\prod_{r=1}^{b_{ki}}(1+q_k^{2r-1}X_k\inv)\inv&\mbox{if $i\neq k$ and $b_{ki}\geq 0$},}
}
where again we denote by $X_i'$ the quantum cluster variables of $\bT_q^{\bQ'}$.
\end{Def}
\begin{Rem} It is easy to see that $\mu_k^q$ preserves the $*$-structure \eqref{star}. For example the second expression of \eqref{qcm} can be expanded and rewritten as
\Eq{
\mu_k^q(X_i')=X_i+\sum_{m=1}^n\bin{n\\m}_{q_k}X_{i,k^m},\tab n=|b_{ki}|
}
using the notation \ref{xik}, which is manifestly self-adjoint by \eqref{star} and the fact that the $q$-binomial \eqref{qbin} is invariant under $q\corr q\inv$. Such observation allows for easy consistency check of any formula involving quantum cluster mutations.
\end{Rem}

\begin{Prop}\cite{FG2}
The quantum cluster mutation $\mu_k^q$ can be written as a composition of two homomorphisms
\Eq{
\mu_k^q=\mu_k^\#\circ \mu_k^m,
}
where $\mu_k':\bT_q^{\bQ'}\to \bT_q^\bQ$ is a monomial transformation defined by
\Eq{
\mu_k^m(X_i'):=\case{X_k\inv&\mbox{if $i=k$},\\ X_i&\mbox{if $i\neq k$ and $b_{ki}\leq 0$},\\ X_{i,k^{b_{ki}}}&\mbox{if $i\neq k$ and $b_{ki}\geq 0$},\label{montran}}
}
and $\mu_k^\#:\bT_q^\bQ\to\bT_q^\bQ$ is a conjugation by the \emph{quantum dilogarithm function}
\Eq{
\mu_k^\#:=Ad_{\Psi^{q_k}(X_k)}, 
}where
\Eq{
\Psi^{q}:=\prod_{r=0}^\oo (1+q^{2r+1}x)\inv.
}
\end{Prop}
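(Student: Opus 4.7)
The strategy is to verify the identity $\mu_k^q = \mu_k^\# \circ \mu_k^m$ by evaluating both sides on each generator $X_i'$ of $\bT_q^{\bQ'}$. Well-definedness of $\mu_k^m$ as a homomorphism reduces to a routine check that the monomial images in $\bT_q^{\bQ}$ satisfy the exchange relations of the mutated seed $\bQ'$, using the formula for $b_{ij}'$ in Definition \ref{qmut}; $\mu_k^\# = Ad_{\Psi^{q_k}(X_k)}$ is automatically an inner automorphism.

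The algebraic engine is the defining functional equation $\Psi^q(q^2 X) = (1 + qX)\Psi^q(X)$ of the quantum dilogarithm, which is immediate from the product definition. Iterating it gives, for $n \ge 0$,
\[
\Psi^q(q^{2n}X)\Psi^q(X)\inv = \prod_{r=1}^{n}(1+q^{2r-1}X), \qquad \Psi^q(q^{-2n}X)\Psi^q(X)\inv = \prod_{r=1}^{n}(1+q^{-(2r-1)}X)\inv.
\]
Combined with the quantum torus commutation relations, a short induction yields the master formula: if $Y\in\bT_q^{\bQ}$ satisfies $X_k Y = q_k^{2n} Y X_k$, then
\[
Ad_{\Psi^{q_k}(X_k)}(Y) = Y \cdot \Psi^{q_k}(q_k^{2n}X_k)\Psi^{q_k}(X_k)\inv.
\]

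I would then check the three cases of \eqref{qcm} in turn. Case (i) $i = k$: $\mu_k^m(X_k')=X_k\inv$ commutes with $\Psi^{q_k}(X_k)$, so passes through unchanged. Case (ii) $i\neq k$, $b_{ki}\le 0$: with $Y = X_i$, the relation $X_k X_i = q_k^{-2b_{ki}}X_iX_k$ (derived from $w_{ki}=d_k b_{ki}$) gives exponent $n = |b_{ki}|$ in the master formula, producing exactly $X_i\prod_{r=1}^{|b_{ki}|}(1+q_k^{2r-1}X_k)$. Case (iii) $i\neq k$, $b_{ki}\ge 0$: here $Y=\mu_k^m(X_i')=X_{i,k^{b_{ki}}} = q_k^{-b_{ki}^2}X_iX_k^{b_{ki}}$ satisfies the \emph{shifted} relation $X_k Y = q_k^{-2b_{ki}} Y X_k$, so the master formula produces a product of factors $(1+q_k^{-(2r-1)}X_k)\inv$. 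The key algebraic trick
\[
1 + q_k^{-(2r-1)}X_k \;=\; q_k^{-(2r-1)}X_k\,(1+q_k^{2r-1}X_k\inv)
\]
then strips off the monomial $q_k^{-b_{ki}^2}X_k^{b_{ki}}$ — the collected powers $\sum_{r=1}^{b_{ki}}(2r-1) = b_{ki}^2$ exactly cancel the prefactor — and converts the product into the desired $\prod_{r=1}^{b_{ki}}(1+q_k^{2r-1}X_k\inv)\inv$.

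The only real obstacle is case (iii): correctly tracking the $q_k$-prefactor of the normalized monomial $X_{i,k^{b_{ki}}}$, and spotting the factorization identity that rewrites $(1+q_k^{-(2r-1)}X_k)\inv$ in terms of $X_k\inv$, so that the answer takes the asymmetric form appearing in \eqref{qcm}. The remaining cases are direct applications of the master formula, and the consistency check via the $\ast$-structure noted in the remark preceding the proposition can be used at each step as a sanity check.
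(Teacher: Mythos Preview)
Your proof plan is correct and complete: the functional equation of $\Psi^q$, the master conjugation formula, and the factorization trick in case (iii) are exactly the standard computation, and your bookkeeping of the $q_k$-powers (in particular $\sum_{r=1}^{b_{ki}}(2r-1)=b_{ki}^2$ cancelling the prefactor of $X_{i,k^{b_{ki}}}=q_k^{-b_{ki}^2}X_iX_k^{b_{ki}}$) checks out against the conventions \eqref{XiXj}--\eqref{biform}.

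There is nothing to compare against in the paper itself: the proposition is simply quoted from \cite{FG2} without proof, so your write-up is not an alternative argument but rather a self-contained verification of a result the paper takes for granted.
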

\begin{Def} A \emph{coframed quiver} $\what{\bQ}$ of $\bQ$ consists of $\bQ$ together with additional frozen vertex $k'$ for each mutable vertex $k$ of $\bQ$ where the multiplier $d_{k'}:=d_k$, and with additional arrow $k\to k'$.
\end{Def}
We will need the following simple observation about mutations of monomials.
\begin{Prop}\label{propmon} Let $\mu_{i_s}\circ\cdots \circ\mu_{i_1}: \bQ'\to \bQ$ be a sequence of cluster mutations. Assume $X_\l\in \bT_q^\bQ$ is a monomial that commutes with any mutable variables $X_k$ in $\bT_q^\bQ$. Then $\mu_{i_1}^q\circ\cdots \mu_{i_s}^q(X_\l)$ remains a monomial in $\bT_q^{\bQ'}$. 

Furthermore, if $\mu_{i_s}\circ\cdots \circ\mu_{i_1}=\s$ is a permutation of the coframed quiver $\what{\bQ}$, then 
\Eq{
\mu_{i_1}^q\circ\cdots \mu_{i_s}^q(X_\l)=\s^*(X_\l).}
\end{Prop}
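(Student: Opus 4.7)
The plan is to exploit the factorization $\mu_k^q = \mu_k^\# \circ \mu_k^m$ from the preceding proposition, together with the observation that the automorphism $\mu_k^\# = \mathrm{Ad}_{\Psi^{q_k}(X_k)}$ is conjugation by a formal power series in $X_k$, and hence fixes every element of $\bT_q^{\bQ}$ that commutes with $X_k$. Applied to the last mutation, this immediately gives $\mu_{i_s}^q(X_\l) = \mu_{i_s}^m(X_\l)$, a monomial in $\bT_q^{\bQ_{s-1}}$, since $X_\l$ commutes in particular with the mutable variable $X_{i_s}$ by hypothesis.

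To iterate, I would prove the following strengthening by induction on $s$: if $X_\l$ is a monomial in $\bT_q^\bQ$ commuting with every mutable cluster variable of $\bT_q^\bQ$, then $\mu_{i_1}^q \circ \cdots \circ \mu_{i_s}^q(X_\l)$ is a monomial in $\bT_q^{\bQ'}$ still commuting with every mutable cluster variable of $\bT_q^{\bQ'}$. The inductive step reduces to checking that $\mu_{i_s}^m(X_\l) \in \bT_q^{\bQ_{s-1}}$ commutes with every mutable $X_j$ there. Since $\mu_{i_s}^m$ is an algebra isomorphism, this is equivalent to $X_\l$ commuting with $(\mu_{i_s}^m)^{-1}(X_j) \in \bT_q^{\bQ_s}$; by the explicit form of \eqref{montran}, the latter is (up to a $q$-scalar) a monomial in the mutable cluster variables $X_j$ and $X_{i_s}$ of $\bT_q^{\bQ_s}$---noting that cluster mutation preserves the frozen subset $Q_0$ and hence the mutable set---with which $X_\l$ commutes by assumption.

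For the second assertion, the first part reduces the composition to $\mu_{i_1}^m \circ \cdots \circ \mu_{i_s}^m(X_\l)$, a composition of monomial transformations corresponding to a single lattice isomorphism $\phi: \L_\bQ \to \L_{\bQ'}$ which is precisely the tropicalization of the classical cluster mutation sequence. The coframed quiver $\what\bQ$ is engineered so that the evolution of the added frozen vertex $k'$ together with the arrow $k\to k'$ records exactly the sign data entering $\mu_k^m$. Under the hypothesis that the classical composition equals the permutation $\s$ of $\what\bQ$, the tropical composition $\phi$ is forced to coincide with the lattice map induced by $\s$, giving $\mu_{i_1}^m \circ \cdots \circ \mu_{i_s}^m(X_\l) = \s^*(X_\l)$.

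The principal obstacle is keeping the bookkeeping consistent: the pullback convention makes $\mu_k^q$ point opposite to the classical $\mu_k$, and the exchange matrix entering \eqref{montran} at each step is that of the target seed of the quantum mutation. The only genuinely non-tautological ingredient is the identification of the composed tropical/monomial transformation with the permutation $\s^*$, which relies on the defining property of the coframed quiver to witness the tropical data correctly; the rest is a direct consequence of the triviality of the $\mu_k^\#$-conjugation on elements that commute with $X_k$.
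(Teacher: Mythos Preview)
Your proposal is correct and follows essentially the same approach as the paper: both use the triviality of $\mu_k^\#$ on elements commuting with $X_k$ to reduce to the monomial transform, both argue that commutativity with mutable variables is preserved (you via an explicit check with $(\mu_{i_s}^m)^{-1}$, the paper by invoking that $\mu_k^q$ is an algebra homomorphism), and both identify the composed monomial transforms with the tropical $Y$-mutation/$c$-vector evolution witnessed by the coframing. Your inductive step is spelled out more carefully than the paper's somewhat terse ``the image remains commutative with the mutable variables in the new seed,'' but the argument is the same.
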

Here by Definition \ref{perm} if $\l=\sum \l_i\vec{e_i}\in \L$, then
\Eq{
\s^*(X_\l) := X'_{\sum\l_i \vec{e}_{\s\inv(i)}}.
}
\begin{proof}If $X_\l$ commutes with any mutable variables $X_k$, then the conjugation $\mu_k^\#$ is trivial, hence the only effect of a cluster mutation is given by the monomial transformation $\mu_k^m$. Since $\mu_k^q$ is an algebra homomorphism, the image of the cluster mutation remains commutative with the mutable variables in the new seed.

Now as the image of $X_\l$ remains a monomial, its mutation is equivalent to the mutation of its tropicalization. Notice that the monomial transformation $\mu_k^m$ coincides with the tropical $Y$-mutation \cite{KN}, hence the tropicalization of the $X$-variables are just the $c$-vectors, with respect to the coframing in our setting. Hence if the sequence of cluster mutations is a permutation $\s$ of mutable index keeping the coframing fixed, it induces a permutation of the $c$-vectors under the tropicalization, and the claim follows. 

\end{proof}
\subsection{Basic Quivers}\label{sec:pre:basicq}
In this subsection, we recall the construction of the basic quiver $\bQ(\bi)$ required for the quantum group embedding \cite{GS,Ip7}, which helps clarifying the cluster nature of the quantum Lusztig's transformation studied in \cite{Ip2}, to be explained in the next section. However, we do not require the full construction (which comes with extra vertices), but only the reduced part corresponding to a given reduced word.

\begin{Def} Let $i\in I$. The \emph{elementary quiver} $\bJ(i)$ consists of
\begin{itemize}
\item The vertex set
\Eq{Q=Q_0=(I\setminus\{i\})\cup \{i_l\}\cup \{i_r\}} 
which are all frozen;
\item The multipliers $D=(d_j)_{j\in Q}$ which is the pull-back of the multipliers \eqref{di} from $I$ under the natural projection $Q\to I$ sending $\{i_l,i_r\}$ to $i$; and 
\item The adjacency matrix $C=(c_{ij})$ which is defined to be
\Eq{
c_{i_l, j}&=c_{j, i_r}=\frac{d_ia_{ij}}{2},\tab j\in I\setminus\{i\},\\
c_{i_l,i_r}&=1.
}
\end{itemize}
The vertices are organized in \emph{levels}, such that the vertex $j\in I\setminus\{i\}$ is placed at level $j$, and $\{i_l,i_r\}$ are placed on the left and right hand side of level $i$ respectively. The arrows between the vertex $j$ and $\{i_l, i_r\}$ are dashed.

Intuitively, we call the set $(I\setminus\{i\})\cup \{i_l\}$ the \emph{left frozen vertices}, and $(I\setminus\{i\})\cup \{i_r\}$ the \emph{right frozen vertices}.
\end{Def}

\begin{Def}\label{basicquiver}Let $\bi=(i_1,...,i_m)$ be a reduced word. The \emph{basic quiver} $\bQ(\bi)$ is constructed by \emph{amalgamating} the elementary quivers 
\Eq{
\bQ(\bi):= \bJ(i_1)*\bJ(i_2)*\cdots*\bJ(i_m),\label{Qbi}
}
successively on each level, in the sense that the rightmost\footnote{By definition, on $\bJ(i)$, except the $i$-th level, the leftmost and rightmost frozen vertices coincide.} frozen vertices of $\bJ(i_{k-1})$ are glued to the leftmost frozen vertices of $\bJ(i_{k})$, with the weight of the corresponding arrows added together. Finally we remove all the vertices that are disjoint from the quiver, and redefine $Q_0$ so that the frozen vertices in the resulting quiver only consists of the left- and right-most vertices of each level.

We will write $\cX_q^{\bi}:=\cX_q^{\bQ(\bi)}$ and $\bT_q^{\bi}:=\bT_q^{\bQ(\bi)}$ to denote the quantum torus algebra associated to $\bQ(\bi)$ and its field of fractions respectively.
\end{Def}

\begin{Def}
Following \cite{Ip7}, we define the \emph{canonical labeling} of $\bQ(\bi)$ as
\Eq{
\{f_i^j \: i\in I, j=0,...,n_i\}
} where $n_i$ is the number of occurrences of $i$ in the reduced word $\bi$, such that the vertex at level $i$ and $j$-th position from the left is labeled by $f_i^j$.
\end{Def}

\begin{Ex} Consider $\g=\sl_4$ and let $\bi_0=(1,2,1,3,2,1)$ be a reduced word of $w_0\in W$. The basic quiver is then the amalgamation of 
\Eq{
\bQ(\bi_0)=\bJ(1)*\bJ(2)*\bJ(1)*\bJ(3)*\bJ(2)*\bJ(1)
}
see Figure \ref{basicq}.

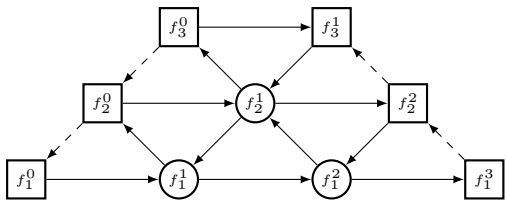
\begin{figure}[H]
\centering
\begin{tikzpicture}[every node/.style={inner sep=0, minimum size=0.5cm, thick, fill=white, draw}, x=1cm, y=1cm]
\node (1) at (0,0) {\tiny$f_1^0$};
\node (2) at (2,0) [circle]{\tiny$f_1^1$};
\node (3) at (4,0) [circle]{\tiny$f_1^2$};
\node (4) at (6,0)  {\tiny$f_1^3$};
\node (5) at (1,1) {\tiny$f_2^0$};
\node (6) at (3,1)  [circle]{\tiny$f_2^1$};
\node (7) at (5,1)  {\tiny$f_2^2$};
\node (8) at (2,2)  {\tiny$f_3^0$};
\node (9) at (4,2)  {\tiny$f_3^1$};
\drawpath{1,2,3,4}{}
\drawpath{5,6,7,3,6,2,5}{}
\drawpath{8,9,6,8}{}
\drawpath{8,5,1}{dashed}
\drawpath{4,7,9}{dashed}
\end{tikzpicture}
\caption{The basic quiver for $\bi_0=(1,2,1,3,2,1)$.}\label{basicq}
\end{figure}
\end{Ex}

Recall \cite[Section 7]{Ip7} that a Coxeter move $C:\bi\mapsto \bi'$ corresponds to a (sequence) of cluster mutations that transforms the quiver
$$\bQ(\bi)\to \bQ(\bi'),$$ and its pullback induces an isomorphism $\bT_q^{\bi'}\to \bT_q^{\bi}$.

To be more explicit, under the commutative Coxeter move $(...i,j...)\to (...j,i,...)$ the quiver $\bQ(\bi)=\bQ(\bi')$ stays the same. For the other two cases \eqref{Cox1}--\eqref{Cox2}, they correspond to the following mutations of the local pieces \eqref{Qbi} inside $\bQ(\bi)$ as
\Eq{\bJ(i)* \bJ(j)*\bJ(i)\to \bJ(j)*\bJ(i)*\bJ(j), \tab a_{ij}a_{ji}=1\label{quivermove1}}
$$
\begin{tikzpicture}[every node/.style={inner sep=0, minimum size=0.4cm, thick}, baseline=(0), x=0.3cm, y=0.3cm]
\node (0) at (0,1.5){};
\node (1) at (0,0) [draw] {$1$};
\node (2) at (6,0) [red, draw, circle] {$2$};
\node (3) at (12,0) [draw] {$3$};
\node (4) at (3, 3) [draw]{$4$};
\node (5) at (9,3) [draw]{$5$};
\path (1) edge[->, thick] (2);
\node at (3,0.5) {$i$};
\path (2) edge[->, thick] (3);
\node at (9,0.5) {$i$};
\path (2) edge[->, thick] (4);
\path (5) edge[->, thick] (2);
\path (4) edge[->, thick] (5);
\node at (6,3.5) {$j$};
\path (4) edge[->, thick, dashed] (1);
\path (3) edge[->, thick, dashed] (5);
\end{tikzpicture}
\xto{\tab\mu_2\tab}
\begin{tikzpicture}[every node/.style={inner sep=0, minimum size=0.4cm, thick}, baseline=(0), x=0.3cm, y=0.3cm]
\node (0) at (0,1.5){};
\node (1) at (0,3) [draw] {$4'$};
\node (2) at (6,3) [draw, circle, red, thick] {$2'$};
\node (3) at (12,3) [draw] {$5'$};
\node (4) at (3, 0) [draw]{$1'$};
\node (5) at (9,0) [draw]{$3'$};
\path (1) edge[->, thick] (2);
\node at (3,3.5) {$j$};
\path (2) edge[->, thick] (3);
\node at (9,3.5) {$j$};
\path (2) edge[->, thick] (4);
\path (5) edge[->, thick] (2);
\path (4) edge[->, thick] (5);
\node at (6,0.5) {$i$};
\path (4) edge[->, thick, dashed] (1);
\path (3) edge[->, thick, dashed] (5);
\end{tikzpicture}
$$
and
\Eq{\bJ(i)* \bJ(j)*\bJ(i)* \bJ(j)\to \bJ(j)*\bJ(i)*\bJ(j)*\bJ(i), \tab a_{ij}a_{ji}=2\label{quivermove2}}
$$
\begin{tikzpicture}[every node/.style={inner sep=0, minimum size=0.4cm, thick}, baseline=(0), x=0.3cm, y=0.3cm]
\node (0) at (0,1.5){};
\node (1) at (0,0) [draw] {$1$};
\node (2) at (6,0) [red,draw, circle] {$2$};
\node (3) at (12,0) [draw] {$3$};
\node (4) at (3, 3) [draw]{$4$};
\node (5) at (9,3) [red,draw,circle]{$5$};
\node (6) at (15,3) [draw]{$6$};
\path (1) edge[->, thin] (2);
\path (2) edge[->, thin] (3);
\node at (3,0.6) {$i$};
\node at (9,0.6) {$i$};
\path (4) edge[->, vthick] (5);
\path (5) edge[->, vthick] (6);
\node at (6,3.6) {$j$};
\node at (12,3.6) {$j$};
\path (2) edge[->, vthick] (4);
\path (5) edge[->, vthick] (2);
\path (3) edge[->, vthick] (5);
\path (4) edge[->, vthick, dashed] (1);
\path (6) edge[->, vthick, dashed] (3);
\end{tikzpicture}
\xto{\mu_2\circ \mu_5\circ\mu_2}
\begin{tikzpicture}[every node/.style={inner sep=0, minimum size=0.4cm, thick},baseline=(0), x=0.3cm, y=0.3cm]
\node (0) at (0,1.5){};
\node (1) at (3,0) [draw] {$1'$};
\node (2) at (9,0) [red,draw, circle] {$5'$};
\node (3) at (15,0) [draw] {$3'$};
\node (4) at (0, 3) [draw]{$4'$};
\node (5) at (6,3) [draw,circle, red, thick]{$2'$};
\node (6) at (12,3) [draw]{$6'$};
\path (1) edge[->, thin] (2);
\path (4) edge[->, vthick] (5);
\path (5) edge[->, vthick] (6);
\path (6) edge[->, vthick] (2);
\path (2) edge[->, vthick] (5);
\path (5) edge[->, vthick] (1);
\path (2) edge[->, thin] (3);
\path (1) edge[->, vthick, dashed] (4);
\path (3) edge[->, vthick, dashed] (6);
\node at (6,0.6) {$i$};
\node at (12,0.6) {$i$};
\node at (3,3.6) {$j$};
\node at (9,3.6) {$j$};
\end{tikzpicture}
$$
where nodes on level $i$ are short.
\begin{Nota} Let $$\cC:=\{C_1\to C_2\to C_3\to\cdots\to C_k\} :\bi\mapsto \bi'$$ denotes a sequence of Coxeter moves, which corresponds to a sequence of cluster mutation $\mu_{i_m}\circ\cdots \circ\mu_{i_1}: \bQ(\bi)\to \bQ(\bi')$. For simplicity we will write these as
$$\mu_{\cC}:=\mu_{i_m}\circ\cdots\circ \mu_{i_1}:\bQ(\bi)\to \bQ(\bi')$$
$$\mu_{\cC}^q:=\mu_{i_1}^q\circ\cdots\circ \mu_{i_m}^q: \bT_q^{\bi'}\to \bT_q^{\bi}$$
\end{Nota}
\section{Quantum Lusztig's Transformation}\label{sec:qLT}
\subsection{Rank 2 Case}\label{sec:qLT:rank2}
In \cite{Ip2}, we proposed a quantization of the Lusztig's transformation \eqref{121} in order to construct the unitary transformation necessary for the definition of positive representations of $\cU_q(\g_\R)$. It was stated as part of \cite[Proposition 5.1]{Ip2}, but we find it more logical to separately reproduce here as a Definition.

\begin{Def} Let $\C(q)\<\a,\b,\c\>$ be the noncommutative field of fractions generated by a triplet of quantum variables $(\a,\b,\c)$ satisfying
\Eq{
\a\b=q^2\b\a,\tab \c\a=q^2\a\c,\tab \b\c=\c\b.
}
Then we define $(\a',\b',\c')$ in $\C(q)\<\a,\b,\c\>$ by
\Eq{\label{qLustran}
\a'&:=(\a+\c)\inv \c\b =\b\c(\a+\c)\inv\\
\b'&:=\a+\c\nonumber\\
\c'&:=(\a+\c)\inv \a\b = \b\a(\a+\c)\inv\nonumber
}
such that they satisfy
\Eq{
\a'\b'=\b'\a',\tab \b'\c'=q^2\c'\b', \tab \c'\a'=q^2\a'\c'.
}
\end{Def}
The same formula \eqref{qLustran} applies to $(\a',\b',\c')$ shows that it is an involution. Indeed we can check that
\Eq{
(\a'',\b'',\c'')=(\a,\b,\c).
}

In Notation \ref{arrows}, they can be presented as\footnote{In \cite{Ip2}, we used the convention of opposite arrows}:
\Eq{\label{flip}\begin{tikzpicture}[baseline={([yshift=3ex]c.base)}]
\node (a) at (0, 0) {$\a$};
\node (b) at +(50: 1.17) {$\b$};
\node (c) at +(0: 1.5) {$\c$};
\foreach \from/\to in {a/c, b/a}
\draw [-angle 90] (\from) -- (\to);
\end{tikzpicture}
\tab\mbox{and}\tab
\begin{tikzpicture}[baseline={([yshift=3ex]b.base)}]
\node (a) at (0, 0) {$\a'$};
\node (b) at +(-50: 1.17) {$\b'$};
\node (c) at +(0: 1.5) {$\c'$};
\foreach \from/\to in {c/b, a/c}
\draw [-angle 90] (\from) -- (\to);
\end{tikzpicture}.}

In other words, \eqref{qLustran} defines an isomorphism
$$\C(q)\<a',\b',\c'\> \to \C(q)\<\a,\b,\c\>$$
and serves as the quantum analogue of the Lusztig's transformation \ref{121}. These collection of non-commutative variables and the corresponding reduced words are written \emph{formally} as
\Eq{\label{qLustranformal}
\bx_i(\a)\bx_j(\b)\bx_i(\c)=\bx_j(\a')\bx_i(\b')\bx_j(\c')
}

However, all these relations in fact suggest that the quantum Lusztig's transformation can be recast into the framework of quantum cluster mutation, which was not realized in \cite{Ip2}. Let 
\Eq{\label{abcX}
&X_3=\c,&& X_2=q\inv\a\c\inv,&& X_5=\b,\\
&X_5'=\c',&& X_2'=q\inv \a'{\c'}\inv,&& X_3'=\b'.\label{abcX2}
}
Then the $q$ commutation relation coincides with the subquivers in \eqref{quivermove1}, up to the arrows between the frozen nodes:
$$
\begin{tikzpicture}[every node/.style={inner sep=0, minimum size=0.4cm, thick}, baseline=(0), x=0.3cm, y=0.3cm]
\node (2) at (6,0) [red, draw, circle] {$2$};
\node (3) at (12,0) [draw] {$3$};
\node (5) at (9,3) [draw]{$5$};
\path (2) edge[->, thick] (3);
\path (5) edge[->, thick] (2);
\end{tikzpicture}
\xto{\tab\mu_2\tab}
\begin{tikzpicture}[every node/.style={inner sep=0, minimum size=0.4cm, thick}, baseline=(0), x=0.3cm, y=0.3cm]
\node (2) at (6,3) [draw, circle, red, thick] {$2'$};
\node (3) at (12,3) [draw] {$5'$};
\node (5) at (9,0) [draw]{$3'$};
\path (2) edge[->, thick] (3);
\path (5) edge[->, thick] (2);
\path (3) edge[->, thick] (5);
\end{tikzpicture}
$$

Furthermore, one can rewrite \eqref{qLustran} using \eqref{abcX} as
\Eq{
X_2'&=X_2\inv\\
X_3'&=X_3(1+qX_2)\\
X_5'&=X_5(1+qX_2\inv)\inv
}
which is exactly the quantum cluster mutation \eqref{qcm} at the vertex $2$! 

Using Notation \ref{xik}, note that \eqref{abcX}--\eqref{abcX2} is equivalent to
\Eq{
\a&=X_{2,3}, &\b&=X_{5}, &\c&=X_3,\\
\a'&=X_{2,3}',&\b'&=X_{5}',&\c'&=X_3',
}
so that the variables are manifestly self-adjoint under \eqref{star}.

Moreover, notice that the quantum cluster mutation formula does not depend on the adjacency between the frozen index ($3$ and $5$) at all, so in principle we are free to choose \emph{any} commutation relation between $\b$ and $\c$, which shows that the \emph{ad hoc} choices of \emph{orientation} arrows made in \cite[Definition 5.3]{Ip2} was actually unnecessary.

Now the doubly-laced case follows the same strategy using the Coxeter move \eqref{Cox2}. Unfortunately this was not discussed in \cite{Ip3}, so the following Definition completes its description. We define the analogue of \eqref{1212} as follows.

\begin{Def}Define the field of fractions $\C(q)\<\a,\b,\c,\d\>$ generated by $\{\a,\b,\c,\d\}$ with multipliers $\{1,2,1,2\}$ respectively, such that they satisfy the commutation relations given by the diagram
$$\begin{tikzpicture}
\node (a) at (0,0){$\a$};
\node (b) at (1,1) {$\b$};
\node (c) at (2,0){$\c$};
\node(d) at (3,1){$\d$};
\drawpath{c,b,d}{vthick}
\drawpath{a,c}{thin}
\end{tikzpicture} $$
i.e.
\Eq{
\a\c=q\inv\c\a,\tab \b\c=q^{-2}\b\c,\tab \b\d=q^2\d\b
}
and commute otherwise. Then we define $\{\a',\b',\c', \d'\}$ in $\C(q)\<\a,\b,\c,\d\>$ by
\Eq{\label{qLustran2}
\a'&:=SR\inv & \b'&:=\b\c^2\d S\inv\nonumber\\
\c'&:= \c\b\a R\inv &\d'&:= R^2 S\inv\nonumber
}
where (written non-commutatively)
\Eq{
R&:=\a\b+\a\d+\c\d,& S&:=\a^2\b+(\a+\c)^2\d.
}
\end{Def}

\begin{Prop} The transformation $(\a,\b,\c,\d)\mapsto (\a',\b',\c',\d')$ is an involution. The elements $R$ and $S$ commute, and the variables satisfy the commutation relation given by the diagram
$$\begin{tikzpicture}
\node (a) at (0,1){$\b'$};
\node (b) at (1,0) {$\a'$};
\node (c) at (2,1){$\d'$};
\node(d) at (3,0){$\c'$};
\drawpath{b,a,c}{vthick}
\drawpath{d,c}{vthick}
\drawpath{d,a}{vthick}
\drawpath{b,d}{thin}
\end{tikzpicture} $$
\end{Prop}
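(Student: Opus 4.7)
The plan is to mirror the cluster-algebraic reinterpretation of the simply-laced case given in \eqref{abcX}--\eqref{abcX2}. First, I would identify $(\a,\b,\c,\d)$ with specific monomials in the quantum cluster variables of the rank-$2$ doubly-laced basic quiver $\bQ(ijij)$ on the left of \eqref{quivermove2}, namely
\[
\a := X_{2,3}, \quad \c := X_3, \quad \b := X_{5,6}, \quad \d := X_6,
\]
where nodes $2$ (short) and $5$ (long) are the mutable nodes and $3, 6$ are the rightmost frozen nodes at their respective levels. Analogously I would identify $(\a',\b',\c',\d')$ with the corresponding monomials in $\bT_q^{(jiji)}$. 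A short direct check (together with a convenient choice of orientations for the dashed frozen arrows, whose ambiguity is the same as explained for the simply-laced case after \eqref{flip}) shows that these monomials satisfy the commutation relations prescribed in the Definition.

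The heart of the argument is to recognize the transformation \eqref{qLustran2} as the image of the quantum cluster mutation sequence
\[
\mu_2^q \circ \mu_5^q \circ \mu_2^q : \bT_q^{(jiji)} \longrightarrow \bT_q^{(ijij)}
\]
associated to the Coxeter move \eqref{quivermove2}. Applying the mutation formula \eqref{qcm} three times in succession to the primed monomials, with $q$-parameter $q_s$ for the first and third mutation (at the short node $2$) and $q$ for the middle one (at the long node $5$), the $q$-binomial exchange denominators assemble into the quantities $R = \a\b + \a\d + \c\d$ (appearing after the $\mu_5$ step) and $S = \a^2\b+(\a+\c)^2\d$ (appearing after the second $\mu_2$ step), matching the denominators in \eqref{qLustran2}.

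Once this cluster interpretation is established, each of the three assertions follows in a unified way. \emph{Involutivity}: the double Coxeter move $(ijij) \to (jiji) \to (ijij)$ induces a composition of six quantum cluster mutations that returns to the original seed as an abstract quiver, and the induced birational map on the cluster variables, hence on the monomials $\a,\b,\c,\d$, is the identity. \emph{Commutativity of $R$ and $S$}: each of them is, up to an invertible monomial prefactor, a polynomial in a single cluster variable of the form $1 + q_*^{2r-1} X_k^{\pm 1}$ arising as the exchange denominator of one of the intermediate mutations, and such polynomials commute once one tracks the braiding of their monomial prefactors. \emph{Commutation relations of the primed variables}: they are monomials in the cluster variables of the mutated seed on the right of \eqref{quivermove2}, so their braiding is read off from that quiver, which restricted to the four relevant vertices gives exactly the diagram in the statement.

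The principal technical obstacle is the explicit non-commutative computation of the triple mutation $\mu_2^q \circ \mu_5^q \circ \mu_2^q$ and the verification that its action on the four primed monomials produces precisely the expressions $\a' = S R^{-1}$, $\b' = \b \c^2 \d S^{-1}$, $\c' = \c\b\a R^{-1}$, $\d' = R^2 S^{-1}$. Careful bookkeeping of exchange matrix entries and their signs at each intermediate stage is required, as is the non-commutative braiding when pulling $q$-binomial denominators past monomial prefactors, and one must keep track of the mixed parameters $q_s$ and $q$ appearing at short and long nodes respectively. The fact that the quantum cluster mutations preserve the $*$-structure \eqref{star} provides a useful consistency check throughout, since each of $R, S, \a', \b', \c', \d'$ must be self-adjoint.
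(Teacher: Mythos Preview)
The paper states this Proposition without an explicit proof, and immediately afterwards records the cluster-algebraic reinterpretation (the next Proposition, with the identifications \eqref{X2356a}--\eqref{X2356b} and the mutation sequence $\mu_2^q\mu_5^q\mu_2^q$) as a separate observation. Your proposal is precisely to use that reinterpretation as the proof, so your strategy coincides with the paper's intended argument; you are simply making explicit what the paper leaves to the reader. In particular, your derivations of the involutivity (from $(\mu_2^q\mu_5^q\mu_2^q)^2=\mathrm{Id}$) and of the primed commutation relations (read off from the mutated subquiver on the right of \eqref{quivermove2}) are exactly right.

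One small caveat: your justification that $R$ and $S$ commute is the weakest link. The claim that each is ``up to an invertible monomial prefactor, a polynomial in a single cluster variable'' arising as an exchange denominator is not literally true once you are past the first mutation---the intermediate denominators are themselves polynomials in the original variables, not monomials times a binomial in one $X_k$. What is true, and suffices, is that $R$ and $S$ arise as the (noncommutative) denominators at successive stages of the mutation sequence and their commutation can be checked directly from the relations among $\a,\b,\c,\d$; alternatively, since $\a'=SR^{-1}$ and $\d'=R^2S^{-1}$ are images of self-adjoint cluster monomials under a $*$-preserving algebra homomorphism, the required commutation of $R$ and $S$ is forced. Either way this is a short direct verification, which is presumably why the paper does not elaborate.
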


This will serve as the quantum analogue of the Lusztig transform, again written \emph{formally} as
\Eq{\label{qLustranformal2}
\bx_i(\a)\bx_j(\b)\bx_i(\c)\bx_j(\d)=\bx_j(\b')\bx_i(\a')\bx_j(\d')\bx_i(\c').
}

We can again recast this as quantum cluster mutations (in the skew-symmetrizable case of \eqref{Cox2}) as follows.
\begin{Prop}
Let 
\Eq{
X_2&=q^{-\frac12}\a\c\inv,& X_3&=\c,& X_5&=q\inv\b\d\inv,& X_6&=\d, \label{X2356a}\\
X_2'&=q^{-\frac12}\a'{\c'}\inv,& X_3&=\c',& X_5&=q\inv \b'{\d'}\inv,& X_6&=\d' \label{X2356b}
}
Then we have the commutation relations given by the subquivers of \eqref{quivermove2} (up to arrows between frozen nodes)
$$
\begin{tikzpicture}[every node/.style={inner sep=0, minimum size=0.4cm, thick}, baseline=(0), x=0.3cm, y=0.3cm]
\node (0) at (0,1.5){};
\node (2) at (6,0) [red,draw, circle] {$2$};
\node (3) at (12,0) [draw] {$3$};
\node (5) at (9,3) [red,draw,circle]{$5$};
\node (6) at (15,3) [draw]{$6$};
\path (2) edge[->, thin] (3);
\path (5) edge[->, vthick] (6);
\path (5) edge[->, vthick] (2);
\path (3) edge[->, vthick] (5);
\end{tikzpicture}
\xto{\mu_2\circ \mu_5\circ\mu_2}
\begin{tikzpicture}[every node/.style={inner sep=0, minimum size=0.4cm, thick},baseline=(0), x=0.3cm, y=0.3cm]
\node (2) at (9,0) [red,draw, circle] {$5'$};
\node (3) at (15,0) [draw] {$3'$};
\node (5) at (6,3) [draw,circle, red, thick]{$2'$};
\node (6) at (12,3) [draw]{$6'$};
\path (5) edge[->, vthick] (6);
\path (6) edge[->, vthick] (2);
\path (2) edge[->, vthick] (5);
\path (2) edge[->, thin] (3);
\path (3) edge[->, vthick] (6);
\end{tikzpicture}
$$
and that 
\Eq{
(X_2',X_3',X_5',X_6')=\mu_2^q\mu_5^q\mu_2^q(X_2,X_3,X_5,X_6)
}
respectively.
\end{Prop}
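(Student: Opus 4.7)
The proposition decomposes into two independent verifications: the $q$-commutation relations among $X_2, X_3, X_5, X_6$ (and their primed analogues) matching the indicated subquivers, and the explicit mutation identity $(X_2', X_3', X_5', X_6') = \mu_2^q\mu_5^q\mu_2^q(X_2, X_3, X_5, X_6)$. My plan is to handle both by direct computation in the noncommutative field $\C(q)\<\a, \b, \c, \d\>$.

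For the commutation relations, I would substitute the monomial expressions and move factors past one another using the four stated base relations ($\a\c = q\inv\c\a$, $\b\c = q^{-2}\c\b$, $\b\d = q^2\d\b$, and commutativity of the remaining pairs). For instance, $X_2X_3 = q^{-1/2}\a\c\inv\cdot\c = q^{-1/2}\a$, while $X_3X_2 = \c\cdot q^{-1/2}\a\c\inv$ reduces to $q^{1/2}\a$ after applying $\c\a = q\a\c$, yielding $X_2X_3 = q\inv X_3 X_2$ as required by the thin short-short arrow $2\to 3$. The remaining arrows $5\to 6$, $5\to 2$, $3\to 5$ are handled identically. The primed verification then reduces to a set of auxiliary commutation identities between the exchange polynomials $R = \a\b+\a\d+\c\d$, $S = \a^2\b+(\a+\c)^2\d$ and each of $\a, \b, \c, \d$; these are obtained by expanding $R$ and $S$ and applying the base relations term by term. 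The commutativity $RS = SR$, on which the well-definedness of \eqref{qLustran2} depends, I would establish as a preliminary step, and then the primed commutations follow from it together with the base relations.

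For the mutation identity, I would apply \eqref{qcm} three times in sequence, tracking both the monomial prefactor and the polynomial exchange factors at each step. Reading from the initial quiver \eqref{quivermove2}, the first $\mu_2^q$ inverts $X_2$ and multiplies $X_3$ and $X_5$ by linear factors in $X_2^{\pm 1}$; the middle $\mu_5^q$ acts at a vertex with a doubly-laced incoming weight and so produces a degree-$2$ exchange factor; the final $\mu_2^q$ assembles the pieces. The crucial algebraic observation is that when rewritten in the $\a, \b, \c, \d$ generators the resulting exchange polynomials collapse to precisely $(\a+\c)$, $R$, and $S$ respectively, and matching the final four monomials against the closed-form expressions \eqref{qLustran2} for $\a', \b', \c', \d'$ concludes the verification.

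The principal obstacle is noncommutative bookkeeping: each exchange factor generates nontrivial commutators with the monomial prefactors, and careful tracking of $q$-powers is required throughout the three-step composition. Two simplifications mitigate this considerably. First, the remark after \eqref{qcm} guarantees that every intermediate expression is $\ast$-invariant under \eqref{star}, giving an immediate sanity check on any candidate formula. Second, Proposition \ref{propmon} pins down the monomial skeleton of the composite mutation by the $c$-vector combinatorics of the coframed quiver, so only the polynomial corrections genuinely require direct computation. Together these principles reduce the proof to a finite and manageable verification culminating in \eqref{qLustran2}.
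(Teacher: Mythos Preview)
Your proposal is correct and matches the paper's (implicit) approach: the paper states this proposition without proof, treating it as a direct computational verification of the commutation relations and of the three-step mutation formula, exactly as you outline. Your use of the $*$-invariance check and of Proposition~\ref{propmon} as sanity checks is in the spirit of the paper's later Section~\ref{sec:mainPf:check}, though not strictly needed for this finite rank~$2$ calculation.
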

Note that again \eqref{X2356a}--\eqref{X2356b} is equivalent to
\Eq{
\a&=X_{2,3}, &\b&=X_{5,6}, &\c&=X_3, &\d&=X_6\\
\a'&=X_{2,3}',&\b'&=X_{5,6}',&\c'&=X_3', &\d'&=X_6'
}
so that the variables are also self-adjoint under \eqref{star}.
\subsection{General Case}\label{sec:qLT:gen}
Armed with the above observations, we can now restate properly the construction of \cite{Ip2} in terms of quantum cluster mutations. 

Let $\bi_0=(i_1,...,i_N)\in\cI(w_0)$ be a longest reduced word, and consider the basic quiver $\bQ(\bi_0)$ constructed in Definition \ref{basicquiver} equipped with the canonical labeling.
\begin{Def} Let $\bH$ be an orientation of the Dynkin diagram with weights $\frac12 a_{ij}a_{ji}$ (presented as thin or thick dashed arrows with all the nodes frozen), and define the amalgamation
\Eq{
\til{\bQ}(\bi_0):=\bQ(\bi_0)*\bH
}
gluing $\bH$ from the right matching the decoration, so that all the dashed arrows between the rightmost frozen nodes of $\bQ(\bi_0)$ are either canceled or replaced by a solid one. 
\end{Def}
\begin{Def}
For $1\leq k\leq N$, if $i_k=i$, define $v_k$ such that $i_k$ is the $v_k$-th occurrence of $i$ from the left of $\bi_0$, and define the monomials $\a_k\in \cX_q^{\til{\bQ}(\bi_0)}$ by
\Eq{
\a_k:= X_{\sum_{j=v_k+1}^{n_i} f_i^j} \label{lusX}
}
i.e. it is (up to a $q$ factor) the product of all the $X$ variables on the $i$-th level of $\til{\bQ}(\bi_0)$ from the $k$-th position of $\bi_0$ to the rightmost frozen nodes.

Similarly define $\a_k'$ using the quantum cluster variables $X_i'$ of the quiver $\bQ(\bi_0')$ obtained from the mutation sequence corresponding to the Coxeter moves \eqref{quivermove1}--\eqref{quivermove2}.
\end{Def}
We can now restate the construction of the quantum Lusztig transformation, the proof of which follows directly from the explicit commutation relations between the $\a_k$'s defined in \cite[Definition 5.3]{Ip2}.
\begin{Prop}
$(\a_k)_{k=1}^N$ coincides with Definition 5.3 of \cite{Ip2} corresponding to the orientation $\bH$ of the Dynkin diagram. Furthermore, $(\a_k')_{k=1}^N$ coincides with the mutation rule Proposition 5.4 of \cite{Ip2}.
\end{Prop}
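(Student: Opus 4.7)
The plan is a direct two-step verification. Both collections $\{\a_k\}$ and $\{\a_k'\}$ are tuples of elements inside the quantum tori $\cX_q^{\til{\bQ}(\bi_0)}$ and $\cX_q^{\til{\bQ}(\bi_0')}$ respectively, so the first assertion reduces to matching the pairwise commutation relations of the $\a_k$ against those prescribed by \cite[Definition~5.3]{Ip2}, while the second reduces to matching the image of each $\a_k$ under $\mu_\cC^q$ against the transformation rule in \cite[Proposition~5.4]{Ip2}, which by locality only needs to be checked for letters inside a single Coxeter substring.

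For the first step, I would compute, for each pair $(k,k')$, the bilinear form $(\lambda_k,\lambda_{k'})$ on $\L_{\til{\bQ}(\bi_0)}$, where $\lambda_k$ is the lattice vector encoding $\a_k$ via \eqref{lusX}, and read off the commutation factor $\a_k\a_{k'}=q^{-2(\lambda_k,\lambda_{k'})}\a_{k'}\a_k$. The essential point is a telescoping phenomenon. At a fixed level $i$, the level-$i$ vertices of $\til{\bQ}(\bi_0)$ form a single directed chain, so the double sum over the two index ranges collapses into a single boundary contribution depending only on $v_k$, $v_{k'}$, and $d_i$. Across two distinct levels $i\neq i'$, the cross-level arrows enter the pairing in the pattern $(i_l\to i')+(i'\to i_r)$ with equal weight $d_ia_{ii'}/2$ and opposite signs contributed by each interior elementary quiver $\bJ(i_m)$; after amalgamation these interior contributions cancel in pairs, leaving only the leftmost endpoints of the summation ranges and the rightmost frozen vertices. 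The Dynkin orientation $\bH$ amalgamated on the right is exactly what specifies the latter contribution, and unwinding this bookkeeping reproduces \cite[Definition~5.3]{Ip2}. In particular this makes manifest the claim from the introduction that the \emph{ad hoc} orientation choice in \cite[Definition~5.3]{Ip2} is nothing but a choice of arrows between frozen nodes.

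For the second step, I would restrict attention to a single rank-$2$ Coxeter substring. Under the canonical labeling, the three (respectively four) letters of the substring correspond to monomials $\a_k$ whose defining lattice vectors are supported on the level-$i$ and level-$j$ vertices of the local subquiver displayed in \eqref{quivermove1} (respectively \eqref{quivermove2}). The identifications \eqref{abcX} and \eqref{X2356a} are exactly the specializations of \eqref{lusX} to those positions, since both express each substring letter as the product of same-level cluster variables from that letter's occurrence to the rightmost frozen vertex. Hence the local mutations $\mu_2$ and $\mu_2\circ\mu_5\circ\mu_2$ realize the quantum Lusztig transformations \eqref{qLustran} and \eqref{qLustran2} verbatim, which is \cite[Proposition~5.4]{Ip2}. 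The $\a_k$ for letters outside the substring are either untouched (when their support avoids the mutable vertices of the substring) or rearranged only by the permutation of frozen labels induced by the Coxeter move, which is absorbed into the canonical relabeling on $\til{\bQ}(\bi_0')$.

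The main obstacle lies entirely in the combinatorial bookkeeping of the first step in the non-simply-laced cases, where one must carefully track the half-integer arrow weights $d_ia_{ij}/2$ between nodes of different lengths, the signs of the boundary contributions dictated by $\bH$, and the interplay between short and long nodes, so that the $q$-powers appearing in the final commutation relations match those of \cite[Definition~5.3]{Ip2} on the nose.
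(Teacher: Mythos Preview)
Your proposal is correct and follows essentially the same route as the paper: the paper states that the proposition ``follows directly from the explicit commutation relations between the $\a_k$'s defined in \cite[Definition~5.3]{Ip2}'', and your first step is precisely that verification spelled out, while your second step unpacks the rank-$2$ analysis of Section~\ref{sec:qLT:rank2} (the identifications \eqref{abcX}--\eqref{abcX2} and \eqref{X2356a}--\eqref{X2356b}) that the paper has already set up as the content behind Proposition~5.4. In short, the paper treats the result as immediate from the preceding setup, and your proposal simply makes the implicit bookkeeping explicit.
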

\begin{Ex} Consider type $A_3$ and an orientation of the Dynkin diagram by 
$$
\bH=
\begin{tikzpicture}[baseline=(2),every node/.style={inner sep=0, minimum size=0.4cm, thick, fill=white, draw}, x=1cm, y=1cm]
\node (1) at (0,1) {$1$};
\node (2) at (0,2){$2$};
\node (3) at (0,3){$3$};
\drawpath{3,2,1}{dashed}
\end{tikzpicture}$$
If we label the quiver $\til{\bQ}(\bi_0)$ corresponding to the initial seed $\bi_0=(121321)$ given by
$$
\begin{tikzpicture}[every node/.style={inner sep=0, minimum size=0.5cm, thick, fill=white, draw}, x=1cm, y=1cm]
\node (1) at (0,0) {$1$};
\node (2) at (2,0) [circle]{$2$};
\node (3) at (4,0) [circle]{$3$};
\node (4) at (6,0)  {$4$};
\node (5) at (1,1) {$5$};
\node (6) at (3,1)  [circle]{$6$};
\node (7) at (5,1)  {$7$};
\node (8) at (2,2)  {$8$};
\node (9) at (4,2)  {$9$};
\drawpath{1,2,3,4}{}
\drawpath{5,6,7,3,6,2,5}{}
\drawpath{8,9,6,8}{}
\drawpath{8,5,1}{dashed}
\end{tikzpicture}$$
then
$$\a_4=X_9,$$
$$\a_2=X_{6,7},\tab\a_5=X_7,$$
$$\a_1=X_{2,3,4},\tab\a_3=X_{3,4},\tab \a_6=X_4.$$
Notice that the leftmost frozen variables are not involved in the definition of $\a_k$.
\end{Ex}
\begin{Rem} Up to a choice of the orientation $\bH$ of the Dynkin diagram, we expect that the variables $(\a_k)_{k=1}^N$ can be identified with the canonical quantization of the \emph{primary coordinates} $P_i$ of \cite{GS} with a certain Poisson structure.
\end{Rem}
\subsection{Positive Representations}\label{sec:qLT:pos}
Given a longest reduced word $\bi_0$, we give the construction of the positive representation $\cP_\l^{\bi_0}$ of $\cU_q(\g_\R)$ in \cite{Ip2} in terms of self-adjoint operators induced from the transformation of the quantum Lusztig's coordinate defined in the previous section. Later in \cite{Ip7}, we implicitly understood the relationship between quantum Lusztig transformation and quantum cluster mutation, and gave the construction in terms of quantum cluster algebra using the basic quiver $\bQ(\bi^{op})*\bQ(\bi)$.

In both construction, the representation depends on the choice of $\bi_0$, but for any other longest reduced words $\bi_0'$, we have a unitary equivalence
\Eq{
\cP_\l^{\bi_0}\simeq \cP_\l^{\bi_0'}
}
since Coxeter moves correspond to a sequence of mutations, which under any polarization becomes a unitary transformation induced by the \emph{quantum dilogarithm function} \cite{Ip2}. 

Furthermore, the action of the Chevalley generators $\bf_i, \bK_i'$ can be written down explicitly as certain telescopic sums and monomials respectively, and is shown in both construction to depend only on the reduced word $\bi_0$, and coincide with the doubling of the \emph{Feigin's homomorphism} \cite{Re}. This was generalized to the Kac--Moody case in \cite{GS}.

However, the problem lies with the $\be_i, \bK_i$ generators. For each reduced words $\bi_0'$ ending with letter $i$, there exists a finite difference operator known as the \emph{canonical operator}. The action of the $\be_i, \bK_i$ generators for $\bi_0$ is then constructed by first taking any reduced word $\bi_0'$ which ends with the letter $i$, take the canonical operator in this special case, and transform this operator back to the given $\bi_0$ using a sequence of unitary transformations induced by the Coxeter moves. In the quantum cluster algebraic setting, the canonical operator is realized as a sum of two cluster monomials, and we perform the corresponding quantum cluster mutations in both the $\bQ(\bi)$ and $\bQ(\bi^{op})$ copy of the basic quiver.

Therefore the crucial step is to show that the construction of the representation of $\be_i, \bK_i$ does not depend on the choice of Coxeter moves from $\bi_0$ to $\bi_0'$. As pointed out in the Introduction, this statement is not explicit proved in both the simply-laced case \cite{Ip2} and the non-simply-laced case \cite{Ip3}, in which the cluster realization in \cite{Ip7} solely depends on.

With the discussion in the previous subsections, we can now state the Main Theorem as follows, the proof of which will be done in Section \ref{sec:mainPf} as a consequence of the Tits' Lemma.
\begin{Thm}\label{mainThm} Let $\bi_0, \bi_0'$ be two longest reduced word, and let $\cC_1, \cC_2:\bi_0\mapsto \bi_0'$ be two sequences of Coxeter moves. Let 
\Eq{
\mu_{\cC_j}^q: \bT_q^{\bi_0'}\to \bT_q^{\bi_0},\tab j=1,2
}
be the induced sequences of quantum cluster mutations corresponding to the Coxeter moves given by \eqref{Cox1}--\eqref{Cox2}. Then 
\Eq{\label{uCuC}
\mu_{\cC_1}^q=\mu_{\cC_2}^q.
}
\end{Thm}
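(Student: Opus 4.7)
The plan is to reformulate $\mu_{\cC_1}^q = \mu_{\cC_2}^q$ as the triviality of the automorphism of $\bT_q^{\bi_0}$ induced by the closed loop $\bi_0 \xto{\cC_1} \bi_0' \xto{\cC_2^{-1}} \bi_0$ of Coxeter moves. Since quantum cluster mutations are algebra homomorphisms that compose naturally, and each Coxeter move is involutive, it suffices to show that every closed cycle in the reduced word graph of $w_0$ induces the identity automorphism on $\bT_q^{\bi_0}$. I would then invoke Tits' Lemma (Theorem \ref{Tits}): the fundamental groupoid of the reduced word graph, whose edges are the Coxeter moves \eqref{Cox0}--\eqref{Cox2}, is generated by closed cycles supported on rank $3$ subsystems of $W$. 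This reduces the problem to a finite case analysis, where each case corresponds to a rank $3$ Dynkin type ($A_3$, $B_3/C_3$, and reducible types such as $A_1\times A_2$ or $A_1\times B_2$) together with a closed cycle of Coxeter moves in $\cI(w_0^{(3)})$ for the longest element $w_0^{(3)}$ of that subsystem, lifted locally onto a substring of $\bi_0$.

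For each such rank $3$ cycle, I would compute the image of the quantum cluster variables under the composition of mutations explicitly. By Proposition \ref{propmon}, any monomial $X_\lambda$ that commutes with every mutable variable encountered along the mutation sequence is sent to $\sigma^*(X_\lambda)$, where $\sigma$ is the induced permutation of the coframed basic quiver, and hence can be read off from the permutation of the tropical $c$-vectors. The nontrivial content lies in the cluster variables that do not commute with the mutable variables; here I would appeal to the $\sigma$-period theorem of \cite{FG3} (Theorem \ref{Ymut}) on quantum tropical $Y$-seeds, which guarantees that the nonmonomial corrections to the $X$-mutations cancel precisely when the mutation sequence closes up into a permutation of the seed data, and which matches the permutation $\sigma$ already computed on the monomial part.

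The hard part will be the explicit verification of the $B_3/C_3$ cycles: in the doubly-laced setting the relevant cycles mix Coxeter moves of types \eqref{Cox1} and \eqref{Cox2}, with the latter itself being a composition of three mutations by \eqref{quivermove2}, so even a single closed cycle has substantial combinatorial length. To keep the calculations tractable, I would exploit the \emph{universally Laurent} property of (upper) cluster algebras: an element of the quantum upper cluster algebra is determined by its Laurent expansion in any single seed, so the desired identity can be proved by computing in whichever intermediate seed exhibits the simplest commutation combinatorics and then transferred back to $\bT_q^{\bi_0}$. Combined with the identification in Section \ref{sec:qLT:rank2} of quantum Lusztig coordinates with distinguished monomial cluster variables, and the observation there that arrows between frozen nodes do not affect the mutation formulas, this should reduce each rank $3$ verification to a bounded, explicit algebraic check.
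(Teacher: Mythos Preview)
Your proposal is correct and follows essentially the same route as the paper: reduce $\mu_{\cC_1}^q=\mu_{\cC_2}^q$ to triviality of automorphisms along closed loops, apply Tits' Lemma (Theorem \ref{Tits}) to reduce to rank~$3$ cycles, and then invoke Theorem \ref{Ymut} together with the classical Lusztig Injectivity Lemma to pass from classical $\sigma$-periodicity to the quantum one. The paper also supplies, as an alternative to citing Theorem \ref{Ymut}, the explicit verification you sketch in your last paragraph, organized via standard monomials and the universally Laurent property (Tricks 1--3 in Section \ref{sec:mainPf:check}); note that only the $A_3$ and $B_3$ cycles require genuine work, since the reducible rank~$3$ types collapse to single edges or commuting squares in the quotient graph.
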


Since the quantum Lusztig's coordinates are related to the quantum cluster variables by \eqref{lusX}, we can now state the quantum version of the injectivity Lemma as follows, which completes the gap of the construction of positive representations described in \cite{Ip2, Ip3}.
\begin{Cor}\label{qLT} If $\bi_0=(i_1,...,i_N)$ is a longest reduced word, and
\Eq{
\bx_{i_1}(a_1)\cdots \bx_{i_N}(a_N)=\bx_{i_1}(a_1')\cdots \bx_{i_N}(a_N')
}
 in the sense of \eqref{qLustranformal} or \eqref{qLustranformal2}, where the right hand side is obtained from the left hand side by a sequence of Coxeter moves $\bi_0\mapsto\cdots\mapsto \bi_0$ that returns to itself, then 
\Eq{
a_i'=a_i,\tab i=1,...,N
}
as elements in the noncommutative field of fractions generated by $\a_i$.
\end{Cor}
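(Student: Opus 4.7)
The plan is to deduce the corollary directly from the Main Theorem by means of the cluster-algebraic reformulation of the Lusztig coordinates developed in Section \ref{sec:qLT:gen}.

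First, I would invoke the Proposition of Section \ref{sec:qLT:gen} which identifies the quantum Lusztig coordinates $(\a_k)_{k=1}^N$, via the monomial formula \eqref{lusX}, with explicit monomials in the cluster variables of $\til{\bQ}(\bi_0)$, and which shows that the Lusztig transformations \eqref{qLustran}--\eqref{qLustran2} match exactly the quantum cluster mutation sequences \eqref{quivermove1}--\eqref{quivermove2}. Consequently, the given loop $\cC:\bi_0\to\cdots\to\bi_0$ of Coxeter moves corresponds to a composition $\mu_\cC^q:\bT_q^{\bi_0}\to\bT_q^{\bi_0}$ of quantum cluster mutations, and under this correspondence the primed coordinates $a_k'$ are precisely the images under $\mu_\cC^q$ of the monomials representing $a_k$.

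Second, I would apply the Main Theorem (Theorem \ref{mainThm}) with $\cC_1=\cC$ and $\cC_2$ equal to the \emph{empty} sequence of Coxeter moves from $\bi_0$ to itself. Both are valid sequences of Coxeter moves between the same pair of reduced words, and the empty sequence clearly induces the identity on $\bT_q^{\bi_0}$; the theorem therefore yields $\mu_\cC^q=\mathrm{id}$. Every cluster monomial is thus fixed by $\mu_\cC^q$, so in particular $a_k'=a_k$ as elements of $\bT_q^{\bi_0}$; pulling back through the embedding of the noncommutative field generated by the $\a_i$ into $\bT_q^{\bi_0}$ (implicit in \eqref{lusX}) then gives the desired equality in the field of fractions itself.

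The main obstacle in this program is, unsurprisingly, the Main Theorem itself, whose proof is deferred to Section \ref{sec:mainPf} and relies on Tits' Lemma together with explicit rank-$3$ computations using universally Laurent polynomials. Once Theorem \ref{mainThm} is in hand, the derivation of Corollary \ref{qLT} is a short bookkeeping argument whose essential content is the observation that any loop of Coxeter moves is ``equivalent'' to the trivial empty loop at the level of the induced quantum cluster mutation.
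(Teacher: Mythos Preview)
Your proposal is correct and matches the paper's own approach: the paper states Corollary \ref{qLT} as an immediate consequence of Theorem \ref{mainThm} together with the monomial relation \eqref{lusX} between the quantum Lusztig coordinates and the quantum cluster variables, which is exactly the argument you spell out (applying the Main Theorem to the loop $\cC$ versus the trivial loop). Your write-up simply makes explicit the bookkeeping that the paper leaves implicit.
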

\section{Tits' Lemma}\label{sec:Tits}
In this section, we introduce the terminologies and preliminaries in order to state Tits' Lemma \cite{Tits}, which reduces the study of the fundamental groupoid of the reduced words graph under Coxeter moves, to cycles arising only from rank $3$ cases. This reduces the proof of Theorem \ref{mainThm} to explicitly checking only mutation sequences forming rank $3$ cycles. We will give a new, constructive proof of this result in Section \ref{sec:TitsPf}.
\subsection{Definitions}\label{sec:Tits:def}

\begin{Def}
A \emph{reduced word graph} of $w\in W$ is a graph $\cG:=\cG(w):=(\cG_0, \cG_1)$ where the vertex set $\cG_0:=\cI(w)$, and we have an edge joining $\bi,\bi'\in \cI(w)$ if the two words are related by a Coxeter move. We will use dashed edge to indicate move \eqref{Cox0}, and thick edges for \eqref{Cox2} or \eqref{Cox3}. It is well known that any two reduced word of $w\in W$ are related by a sequence of Coxeter moves, hence $\cG$ is a connected graph.
\end{Def}
In the construction of positive representations, Coxeter moves of the first type \eqref{Cox0} amounts to relabeling of the index of variables, and are not important in the algebraic structures of the representations. Therefore it is useful to consider a contraction of the reduced word graph and identify vertices that are related by the commutative Coxeter moves \eqref{Cox0}.

\begin{Def}
We define the equivalence classes $[\bi]$ of \emph{commutative words}, where $\bi\sim \bi'$ if they are related by a commutative Coxeter move \eqref{Cox0}. 
\end{Def}
\begin{Def}
A \emph{quotient reduced word graph} $\over{\cG}$ is a graph where the vertex set $\over{\cG}_0$ is the set of equivalence classes of commutative words $[\bi]$, and an edge joins $[\bi]$ and $[\bi']$ if there exists representatives $\bi\in[\bi]$ and $\bi'\in[\bi']$ that are related by a non-commutative Coxeter move \eqref{Cox1}--\eqref{Cox3}.
\end{Def}
In other words $\over{\cG}$ is obtained from a contraction of the reduced word graph $\cG$ where we contract any edges $\bi - \bi'$ corresponding to commutative Coxeter moves, and identify the vertices $\bi=\bi'$.

\begin{Nota}
We will use the \emph{stack notation} to denote the equivalence classes, such that commuting letter are written in the same position. For example in type $A_3$ consider $\bi=212321$, so that the position of $1,3$ can be interchanged by the commutative Coxeter move. We then use the notation
$$[\bi]=\{212321,213231,231213,231231\} :=: 2\sss2\sss. $$
\end{Nota}

\begin{Ex} If $\g$ is of rank $2$, for the longest element $w_0\in W$, obviously its reduced word graph just consists of a connected pair of vertices:

\begin{center}
\renewcommand{\arraystretch}{2}
\begin{tabular}{cl}
\begin{tikzpicture}[every node/.style={inner sep=0, minimum size=0.3cm, thick},  x=1cm, y=1cm]
\node (1) at (0,0){$12$};
\node (2) at (2, 0) {$21$};
\path (1) edge[-,dashed] (2);
\end{tikzpicture}& type $A_1\x A_1$\\
\begin{tikzpicture}[every node/.style={inner sep=0, minimum size=0.3cm, thick},  x=1cm, y=1cm]
\node (1) at (0,0){$121$};
\node (2) at (2, 0) {$212$};
\path (1) edge[-] (2);
\end{tikzpicture}& type $A_2$\\
\begin{tikzpicture}[every node/.style={inner sep=0, minimum size=0.3cm, thick},  x=1cm, y=1cm]
\node (1) at (0,0){$1212$};
\node (2) at (2, 0) {$2121$};
\path (1) edge[-, vthick] (2);
\end{tikzpicture}& type $B_2$\\
\begin{tikzpicture}[every node/.style={inner sep=0, minimum size=0.3cm, thick},  x=1cm, y=1cm]
\node (1) at (0,0){$121212$};
\node (2) at (2, 0) {$212121$};
\path (1) edge[-,vthick] (2);
\end{tikzpicture}& type $G_2$
\end{tabular}
\end{center}
\end{Ex}
\begin{Ex}\label{ExRank3} If $\g$ is decomposable type of rank $3$, i.e. of type $A_1\x A_1 \x A_1$, $A_2\x A_1$ or $B_2\x A_1$, then $\cG$ is a $6$-,$8$- or $10$-gon respectively, see Figure \ref{Rank3Graph}. 

\begin{figure}[H]
\centering
\begin{tikzpicture}[every node/.style={inner sep=0, minimum size=0.3cm, thick}, baseline=(0), x=2cm, y=1cm]
\node (1) at (0,3){$123$};
\node (2) at (0,2){$213$};
\node(3) at (0,1){$231$};
\node(4) at (1,1){$321$};
\node(5) at (1,2){$312$};
\node(6) at (1,3){$132$};
\node (0)at (0.5,0){$A_1\x A_1\x A_1$};
\path (1) edge[-,dashed] (2);
\path (2) edge[-,dashed] (3);
\path (3) edge[-,dashed] (4);
\path (4) edge[-,dashed] (5);
\path (5) edge[-,dashed] (6);
\path (6) edge[-,dashed] (1);
\end{tikzpicture}
\tab\tab
\begin{tikzpicture}[every node/.style={inner sep=0, minimum size=0.3cm, thick}, baseline=(0),  x=2cm, y=1cm]
\node (1) at (2,0){$1213$};
\node (2) at (1,0){$2123$};
\node(3) at (1,1){$2132$};
\node(4) at (1,2){$2312$};
\node(5) at (1,3){$3212$};
\node(6) at (2,3){$3121$};
\node(7) at (2,2){$1321$};
\node(8) at (2,1){$1231$};
\node (0) at (1.5,-1){$A_2\x A_1$};
\path (1) edge[-] (2);
\path (2) edge[-,dashed] (3);
\path (3) edge[-,dashed] (4);
\path (4) edge[-,dashed] (5);
\path (5) edge[-] (6);
\path (6) edge[-,dashed] (7);
\path (7) edge[-,dashed] (8);
\path (8) edge[-,dashed] (1);
\end{tikzpicture}
\tab\tab
\begin{tikzpicture}[every node/.style={inner sep=0, minimum size=0.3cm, thick}, baseline=(0),  x=2cm, y=1cm]
\node (1) at (2,0){$12123$};
\node (2) at (1,0){$21213$};
\node(3) at (1,1){$21231$};
\node(4) at (1,2){$21321$};
\node(5) at (1,3){$23121$};
\node(6) at (1,4){$32121$};
\node(7) at (2,4){$31212$};
\node(8) at (2,3){$13212$};
\node(9) at (2,2){$12312$};
\node(10) at (2,1){$12132$};
\node(0) at (1.5,-1){$B_2\x A_1$};
\path (1) edge[-, vthick] (2);
\path (2) edge[-,dashed] (3);
\path (3) edge[-,dashed] (4);
\path (4) edge[-,dashed] (5);
\path (5) edge[-, dashed] (6);
\path (6) edge[-,vthick] (7);
\path (7) edge[-,dashed] (8);
\path (8) edge[-,dashed] (9);
\path (9) edge[-,dashed] (10);
\path (10) edge[-,dashed] (1);
\end{tikzpicture}
\caption{Reduced word graph of decomposable rank $3$ types. Here the index $1,2,3$ correspond to the respective type factors indicated.\label{Rank3Graph}}
\end{figure}
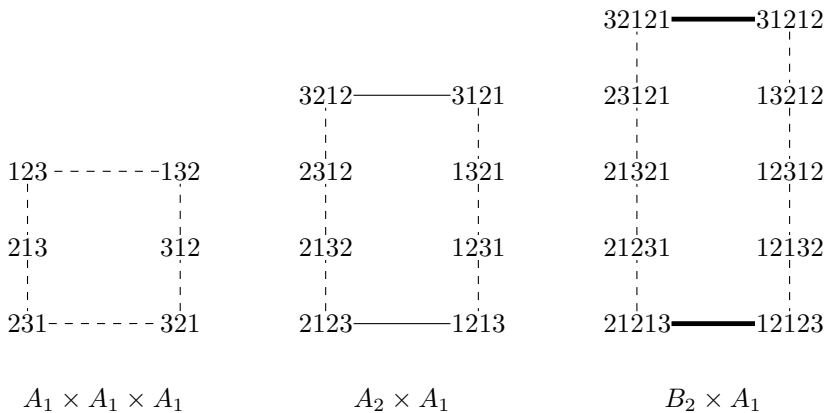
\end{Ex}

The most important cases we need in this paper are the simple rank $3$ cases.
\begin{Thm} \label{14gon} The quotient reduced word graph for the longest element $w_0\in W$ is a
\begin{itemize}
\item $8$-gon if $\g$ is of type $A_3$,
\item $14$-gon if $\g$ is of type $B_3$.
\end{itemize}
\end{Thm}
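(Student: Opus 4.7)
The claim is that in types $A_3$ and $B_3$ the quotient graph $\over{\cG}$ is a single cycle. Since this is a finite combinatorial statement about two specific rank $3$ Coxeter groups, the most transparent route is direct enumeration of commutation classes. My plan is to fix a canonical reduced word of $w_0$ in each type, systematically apply the non-commutative Coxeter moves \eqref{Cox1} (and \eqref{Cox2} in type $B_3$) together with all available commutations \eqref{Cox0}, and verify three things: (i) the procedure terminates in $8$ (resp.\ $14$) distinct commutation classes; (ii) each class admits exactly two non-commutative Coxeter moves into different neighboring classes; and (iii) the induced adjacency is connected, hence a single cycle of the claimed length.

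For $A_3$, I would take $\bi_0 = 123121$, a reduced word for the longest element of $S_4$ (length $6$). Since the only commuting pair is $s_1,s_3$, the equivalence classes can be recorded compactly in stack notation. I would iteratively apply the braid moves $121 \leftrightarrow 212$ and $232 \leftrightarrow 323$ at every admissible location, closing each new reduced word under the $s_1 s_3 = s_3 s_1$ commutations, until no new class appears. A check of the moves available at each class shows exactly two non-commutative neighbors, producing the $8$-gon.

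For $B_3$, the longest element has length $9$, and both types of braid moves appear (namely \eqref{Cox1} from the $A_2$ sub-diagram on letters $\{1,2\}$, and \eqref{Cox2} from the $B_2$ sub-diagram on letters $\{2,3\}$). Starting from a canonical reduced word such as $\bi_0=123121321$ (or any convenient fixed choice), I would repeat the same enumeration procedure: close under commutations of $s_1,s_3$, then apply one non-commutative move, and iterate. The enumeration yields $14$ commutation classes arranged in a single cycle alternating the two types of braid moves in a controlled pattern.

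\textbf{Main obstacle.} The proof is conceptually elementary but combinatorially delicate: the real work is the bookkeeping, ensuring that (a) no class is omitted, (b) the two non-commutative moves at each class really produce two distinct new classes (as opposed to collapsing or branching), and (c) the resulting graph is a single cycle rather than a disjoint union of cycles. A more structural alternative would be to invoke Elnitsky's bijection between commutation classes of $w_0$ in type $A_n$ and rhombic tilings of a regular $2n$-gon (with the braid-flip graph on tilings of a hexagon being the $8$-cycle), and the analogous tiling model in type $B_n$; but at rank $3$ the finite enumeration is more self-contained and directly produces the explicit list of classes needed later in the rank $3$ cluster mutation computation of Section \ref{sec:mainPf}.
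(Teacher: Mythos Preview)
Your approach is essentially the same as the paper's: direct enumeration of the commutation classes and explicit construction of the quotient graph. The paper simply lists the reduced word graph $\cG$ for $A_3$ (with $16$ vertices) and contracts it to the $8$-gon $\over{\cG}$, then gives the $14$-gon $\over{\cG}$ for $B_3$ directly in stack notation (noting that the full $\cG$ has $42$ vertices); your three verification conditions (i)--(iii) are exactly what one must check to justify those pictures, so you are just being a bit more explicit about the bookkeeping.

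One small caution: in the paper's convention for $B_3$ the doubly-laced edge is between nodes $1$ and $2$ (so the $B_2$ move \eqref{Cox2} is $1212\leftrightarrow 2121$ and the $A_2$ move \eqref{Cox1} is $232\leftrightarrow 323$), opposite to what you wrote. This is only a labeling choice, but make sure your chosen initial word (you suggest $123121321$) is actually reduced for $w_0$ under whichever convention you fix, and that the resulting $14$ classes match the ones used in the explicit mutation computation of Section~\ref{sec:mainPf:B3}, which is keyed to the paper's labeling and initial word $\bi_0=121232123$.
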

\begin{proof} This can be done by computing the reduced word graph $\cG$ explicitly. For type $A_3$ it is given by Figure \ref{A3graph} . The corresponding Coxeter moves among the vertices is clear by comparing the different expression of the reduced words.

\begin{figure}[H]
\centering
\begin{tikzpicture}[every node/.style={inner sep=0, minimum size=0.3cm, thick},  x=2cm, y=1cm]
\node (1a) at (3,7){$121321$};
\node (2) at (2,6){$212321$};
\node (3a) at (2,5){$213231$};
\node (3b) at (1,4){$213213$};
\node(3c) at (3,4){$231231$};
\node(3d) at (2,3){$231213$};
\node(4) at (2,2){$232123$};
\node(5a) at (3,1){$323123$};
\node(5b) at (4,1){$321323$};
\node(6) at (5,2){$321232$};
\node(7a) at (5,3){$312132$};
\node(7b) at (4,4){$132132$};
\node(7c) at (6,4){$312312$};
\node(7d) at (5,5){$132312$};
\node(8) at (5,6){$123212$};
\node (1b) at (4,7){$123121$};
\path (1a) edge[-] (2);
\path (2) edge[-] (3a);
\path (3d) edge[-] (4);
\path (4) edge[-] (5a);
\path (5b) edge[-] (6);
\path (6) edge[-] (7a);
\path (7d) edge[-] (8);
\path (8) edge[-] (1b);
\path (3a) edge[-,dashed] (3b);
\path (3b) edge[-,dashed] (3d);
\path (3d) edge[-,dashed] (3c);
\path (3c) edge[-,dashed] (3a);
\path (7a) edge[-,dashed] (7b);
\path (7b) edge[-,dashed] (7d);
\path (7d) edge[-,dashed] (7c);
\path (7c) edge[-,dashed] (7a);
\path (1a) edge[-,dashed] (1b);
\path (5a) edge[-,dashed] (5b);
\end{tikzpicture}
\caption{Reduced word graph in type $A_3$.\label{A3graph}}
\end{figure}
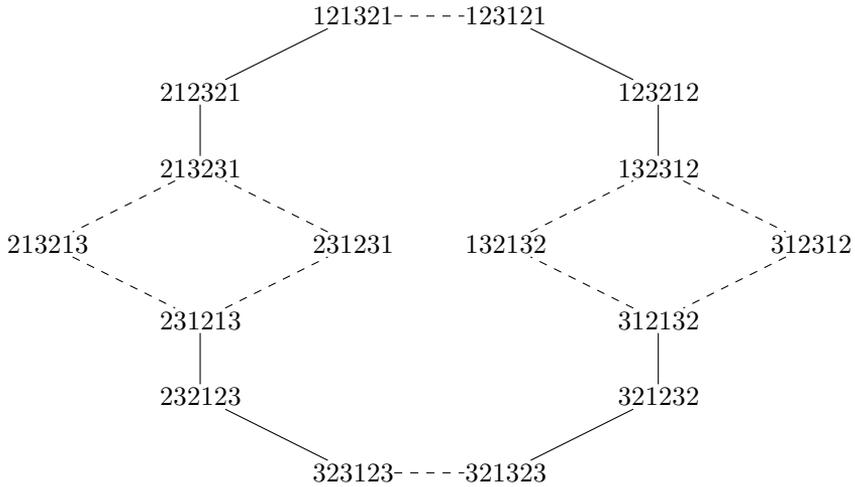
On the other hand the quotient graph $\over{\cG}$ forms an $8$-gon as in Figure \ref{A3quotient}
\begin{figure}[H]
\centering
\begin{tikzpicture}[every node/.style={inner sep=0, minimum size=0.3cm, thick},  x=2cm, y=1cm]
\node (1) at (3,5){$[\bi_0]=12\sss21$};
\node (2) at (2,4){$[\bi_1]=212321$};
\node (3) at (2,3){$[\bi_2]=2\sss2\sss$};
\node(4) at (2,2){$[\bi_3]=232123$};
\node(5) at (3,1){$[\bi_4]=32\sss23$};
\node(6) at (4,2){$[\bi_5]=321232$};
\node(7) at (4,3){$[\bi_6]=\sss2\sss2$};
\node(8) at (4,4){$[\bi_7]=123212$};
\path (1) edge[-] (2);
\path (2) edge[-] (3);
\path (3) edge[-] (4);
\path (4) edge[-] (5);
\path (5) edge[-] (6);
\path (6) edge[-] (7);
\path (7) edge[-] (8);
\path (8) edge[-] (1);
\end{tikzpicture}
\caption{Quotient reduced word graph in type $A_3$.\label{A3quotient}}
\end{figure}
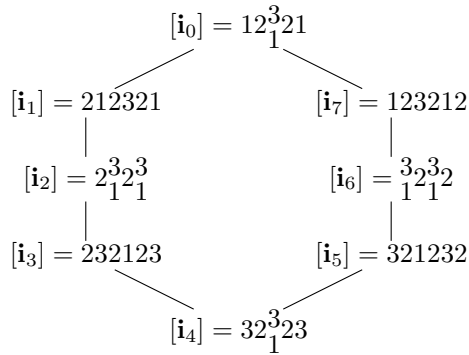

The quotient reduced word graph $\over{\cG}$ in type $B_3$ is given in Figure \ref{B3quotient}. Here $a_{12}a_{21}=2$ so that we need to consider Coxeter move \eqref{Cox2}: $1212- 2121$.
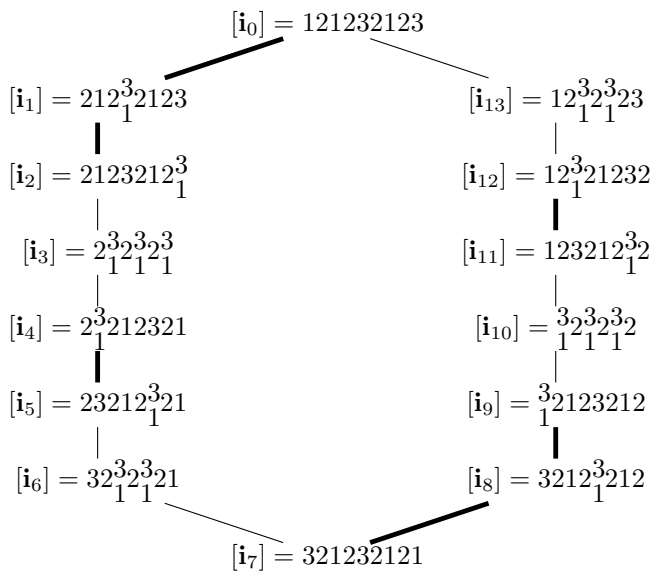
\begin{figure}[H]
\centering
\begin{tikzpicture}[every node/.style={inner sep=0, minimum size=0.3cm, thick},  x=3cm, y=1cm]
\node (1) at (2,6){$[\bi_{13}]=12\sss2\sss23$};
\node (2) at (1,7){$[\bi_0]=121232123$};
\node (3) at (0,6){$[\bi_1]=212\sss2123$};
\node(4) at (0,5){$[\bi_2]=2123212\sss$};
\node(5) at (0,4){$[\bi_3]=2\sss2\sss2\sss$};
\node(6) at (0,3){$[\bi_4]=2\sss212321$};
\node(7) at (0,2){$[\bi_5]=23212\sss21$};
\node(8) at (0,1){$[\bi_6]=32\sss2\sss21$};
\node(9) at (1,0){$[\bi_7]=321232121$};
\node(10) at (2,1){$[\bi_8]=3212\sss212$};
\node(11) at (2,2){$[\bi_9]=\sss2123212$};
\node(12) at (2,3){$[\bi_{10}]=\sss2\sss2\sss2$};
\node(13) at (2,4){$[\bi_{11}]=123212\sss2$};
\node(14) at (2,5){$[\bi_{12}]=12\sss21232$};
\path (1) edge[-] (2);
\path (2) edge[-, vthick] (3);
\path (3) edge[-, vthick] (4);
\path (4) edge[-] (5);
\path (5) edge[-] (6);
\path (6) edge[-, vthick] (7);
\path (7) edge[-] (8);
\path (8) edge[-] (9);
\path (9) edge[-, vthick] (10);
\path (10) edge[-, vthick] (11);
\path (11) edge[-] (12);
\path (12) edge[-] (13);
\path (13) edge[-, vthick] (14);
\path (14) edge[-] (1);
\end{tikzpicture}
\caption{Quotient reduced word graph in type $B_3$.\label{B3quotient}}
\end{figure}

The original reduced graph $\cG$ can be obtained from $\over{\cG}$ by replacing the commutative words with the corresponding subgraph that involves only the commutative Coxeter move \eqref{Cox0}. For example, in $\cG$ we have the subgraph as in Figure \ref{subgraphG}. In particular, while $\over{\cG}$ consists of $14$ vertices, $\cG$ consists of $42$ vertices.
\begin{figure}[H]
\centering
\begin{tikzpicture}[every node/.style={inner sep=0, minimum size=0.3cm, thick},  baseline=(2), x=1cm, y=0.9cm]
\node(0) at (0,3){$\vdots$};
\node (1) at (0,2){$2123212\sss$};
\node (2) at (0,1){$2\sss2\sss2\sss$};
\node(3) at (0,0){$2\sss212321$};
\node(4) at (0,-1){$\vdots$};
\path(0) edge[-,vthick](1);
\path (1) edge[-](2);
\path (2) edge[-](3);
\path(3) edge[-,vthick](4);
\end{tikzpicture}
\tab$\leadsto$\tab
\begin{tikzpicture}[every node/.style={inner sep=0, minimum size=0.3cm, thick}, baseline=(1), x=1cm, y=0.9cm]
\node (0) at (1,5){$\vdots$};
\node (0a) at (1,4){$212321213$};
\node (0b) at (5,4){$212321231$};
\node (1) at (0,1){$213213213$};
\node (2) at (1,3){$213231213$};
\node(3) at (3,2){$213231231$};
\node (4) at (2,0){$213213231$};
\node(5) at (4,1){$213213231$};
\node(6) at (5,3){$213231231$};
\node(7) at (7,2){$231231231$};
\node(8) at (6,0){$231213231$};
\node(9a) at (2,-1){$213212321$};
\node(9b) at (6,-1){$231212321$};
\node(9) at (6,-2){$\vdots$};
\path (1) edge[-, dashed] (2);
\path (2) edge[-, dashed] (3);
\path (3) edge[-, dashed] (4);
\path (4) edge[-, dashed] (1);
\path (5) edge[-, dashed] (6);
\path (6) edge[-, dashed] (7);
\path (7) edge[-, dashed] (8);
\path (8) edge[-, dashed] (5);
\path (1) edge[-, dashed] (5);
\path (2) edge[-, dashed] (6);
\path (3) edge[-, dashed] (7);
\path (4) edge[-, dashed] (8);
\path (0a) edge[-, dashed] (0b);
\path (9a) edge[-, dashed] (9b);
\path (0a) edge[-](2);
\path (0b) edge[-](6);
\path (9a) edge[-](4);
\path (9b) edge[-](8);
\path(0a) edge[-,vthick](0);
\path(9b) edge[-,vthick](9);
\end{tikzpicture}
\caption{A subgraph of $\cG$ obtained from ${\over{\cG}}$.\label{subgraphG}}
\end{figure}
\end{proof}

\subsection{Statement}\label{sec:Tits:state}
\begin{Def} \label{fundgp} Let $\cG$ be a connected graph. We say that the fundamental groupoid $\pi_1(\cG)$ is generated by closed paths $\a_1,...,\a_n$ if the fundamental group $\pi_1(\cG,*)$ over any fixed based points $*\in \cG_0$ is generated by closed paths of the form $\c\inv\circ \a_i\circ \c$ where $\c$ is any path joining $*$ to any vertex of $\a_i$.
\end{Def}

We can now state one of the main results that is required for the construction of positive representations as follows.
\begin{Thm}[Tits' Lemma]\label{Tits} Let $w\in W$ with $l(w)\geq 3$. The fundamental groupoid of the reduced word graph $\cG(w)$ is generated by closed paths of the form
\begin{itemize}
\item[(a)] Commuting Coxeter moves at different positions:
$$\begin{tikzpicture}[every node/.style={inner sep=0, minimum size=0.3cm, thick},  x=2cm, y=0.5cm]
\node (1) at (0,2){$\cdots \bp_{ij}\cdots \bp_{kl}\cdots$};
\node (2) at (2,2){$\cdots \bp_{ij}\cdots \bp_{lk}\cdots$};
\node (3) at (2,0){$\cdots \bp_{ji}\cdots \bp_{kl}\cdots$};
\node(4) at (0,0){$\cdots \bp_{ji}\cdots \bp_{lk}\cdots$};
\path (1) edge[-] (2);
\path (2) edge[-] (3);
\path (3) edge[-](4);
\path (4) edge[-] (1);
\end{tikzpicture}$$
\item[(b)] Rank 3 cycles:
$$\begin{tikzpicture}[every node/.style={inner sep=0, minimum size=0.3cm, thick},  x=1cm, y=0.5cm]
\node (1) at (0,2){$\bi'\bi_1\bi''$};
\node (2) at (2,2){$\cdots$};
\node (3) at (4,2){$\bi'\bi_r\bi''$};
\node (4) at (2,0){$\bi'\bi_0\bi''$};
\path (1) edge[-] (2);
\path (2) edge[-] (3);
\path (3) edge[-](4);
\path (4) edge[-] (1);
\end{tikzpicture}$$
where $\bi_0,...,\bi_r$ and their Coxeter moves involve only three elements of $I$.
\end{itemize}
\end{Thm}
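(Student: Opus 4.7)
The plan is to prove Tits' Lemma by induction on the size of the reduced word graph $\cG(w)$, measured primarily by $l(w)$ together with the number of distinct simple reflections appearing in any reduced expression of $w$ (the \emph{support} of $w$).

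For the base case, suppose either $l(w) \leq 3$ or $|\mathrm{supp}(w)| \leq 3$. In both situations every Coxeter move appearing anywhere in $\cG(w)$ involves at most three elements of $I$, so \emph{any} closed loop is automatically a rank-$3$ cycle in the sense of (b), possibly chained with trivial commutation squares of type (a). The explicit rank-$3$ quotient graphs illustrated in Example \ref{ExRank3} and Theorem \ref{14gon} serve as the ``atoms'' on which we build.

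For the inductive step, fix a base word $\bi_0 = (i_1, \dots, i_N) \in \cI(w)$ and a closed path $\alpha$ in $\cG(w)$ based at $\bi_0$. Let $s = s_{i_1}$ be the first letter of $\bi_0$. The strategy is to show that, modulo conjugates of type (a) and type (b) cycles, we may replace $\alpha$ by a homotopic loop all of whose vertices start with the letter $i_1$. Once this reduction is achieved, deleting the first letter realizes $\alpha$ as a closed loop in $\cG(s \cdot w)$ for $sw$ of length $l(w)-1$, and we finish by invoking the inductive hypothesis. The reduction itself is performed locally: scan $\alpha$ for the first step at which a Coxeter move acts at position $1$ and thereby alters the initial letter from $i_1$ to some $j$. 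Since $\alpha$ closes back up at $\bi_0$, the first letter must eventually return to $i_1$; one examines the minimal subpath $\beta$ of $\alpha$ between these two events. The claim is that $\beta$ is homotopic (modulo type (a) and (b) cycles) to a subpath that never changes the initial letter. To establish this, one tracks how the ``displaced'' initial letter propagates through the reduced word and how Coxeter moves at positions $\geq 2$ interact with it; the key input is a careful normal-form argument showing that any move touching position $1$ can be pulled past commuting moves elsewhere (yielding a type (a) cycle) and combined with adjacent moves involving only the letters $\{i_1, j, k\}$ for some third index $k$ (yielding a type (b) cycle among these three letters).

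The main obstacle is the book-keeping in the last paragraph: one must enumerate how the letter $i_1$ can ``disappear'' from the front of the word under a single Coxeter move and re-appear later, and for each such pattern exhibit an explicit homotopy built from generators (a) and (b). The hardest configurations are those in which the move that removes $i_1$ from the front is of type \eqref{Cox1} or \eqref{Cox2} and the word in between uses several indices non-commutatively with $i_1$, since these are precisely the cases where a non-trivial rank-$3$ cycle among $\{i_1, j, k\}$ is unavoidable. The point of the constructive aspect of the proof is that these rank-$3$ corrections can be read off directly from the local picture of the quotient reduced word graph for rank-$3$ subsystems given in Theorem \ref{14gon}, so the induction closes without appeal to any geometric realization of the Tits cone.
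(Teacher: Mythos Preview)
Your outline shares the paper's strategy: induct on $l(w)$, track a boundary letter (you pick the first, the paper picks the last --- this is cosmetic), and use rank-3 cycles to repair the loop where that letter changes. The genuine gap is that the central reduction --- that the minimal subpath $\beta$ along which the first letter has left $i_1$ can be homotoped rel endpoints, modulo (a) and (b), to a path whose first letter is $i_1$ throughout --- is asserted but not proven. You correctly flag it as ``the main obstacle'' and refer to ``a careful normal-form argument'', yet none is supplied; saying the rank-3 corrections ``can be read off directly from the local picture'' is not an argument, because the intermediate words of $\beta$ may involve arbitrarily many indices beyond $i_1$ and $j$, and no third index $k$ has been singled out at this point.

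The paper makes precisely this step explicit. At the two boundary edges of a maximal subpath $\beta'$ with constant last letter $i$, the Coxeter moves changing the last letter are some $C_{ij}$ and $C_{ik}$. Setting $J=\{i,j,k\}$ and $w_J$ the longest element of $W_J$, one has $l(ww_J)<l(w)$, so there exist reduced words for $w$ ending in any chosen reduced word of $w_J$. The paper then builds explicit paths from the boundary vertices $\bi_1,\bi_2$ to such canonical words $\bj_j,\bj_k$ using only commuting squares of type (a) (each move in these paths keeps the tail $\bp_{ji}$, resp.\ $\bp_{ki}$, fixed and so commutes with $C_{ij}$, resp.\ $C_{ik}$), and closes the bridge with a single rank-3 cycle of type (b) inside $\cG(w_J)$. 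This decomposes the loop into a cycle $\beta_\#$ all of whose words end in $i$ (handled by induction on $l$) and a residual cycle $\beta_*$ with strictly smaller $m_i(\beta_*)$; iterating removes $i$ from the set of last letters entirely, and a second induction parameter $m=|\w(\beta)|$ finishes. Your sketch contains none of this construction --- the choice of $J$, the use of $w_J$, the commuting-square ladders, or the secondary induction on $m$ --- and without it the argument does not close.
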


By reducing the rank 3 cycles in type (b) to the quotient graph, we have
\begin{Cor} The fundamental groupoid $\pi_1(\over{\cG})$ of the quotient reduced word graph is generated by squares, octagons and 14-gons, corresponding to commuting Coxeter moves, and rank 3 Coxeter cycles of type $A_3$ and $B_3$ respectively.
\end{Cor}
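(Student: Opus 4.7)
The plan is to apply Tits' Lemma (Theorem~\ref{Tits}) and transport its two families of generators of $\pi_1(\cG)$ through the quotient map $p : \cG \to \over{\cG}$ that contracts every commutative Coxeter edge. Since contracting a subgraph of a graph induces a surjection on fundamental groupoids, it then suffices to compute the image of each Tits generator under $p$.

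For type (a), the four sides of a square of commuting Coxeter moves come in two parallel pairs, each pair consisting of the \emph{same} Coxeter move performed at a fixed position. A short case split on how many of the two involved moves are commutative then shows that the image in $\over{\cG}$ is either a point (both commutative), a bigon that collapses to a single edge in the simple quotient graph (exactly one commutative), or a genuine $4$-cycle (neither commutative); only the last subcase contributes and yields the desired square generators.

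For type (b), a rank 3 cycle lives in the reduced word graph of the longest element of a rank 3 parabolic subgroup of $W$. For the decomposable types $A_1\x A_1\x A_1$, $A_2\x A_1$ and $B_2\x A_1$ listed in Example~\ref{ExRank3}, direct inspection shows that each such cycle has at most two non-commutative edges, all connecting the same pair of commutation-equivalence classes, so the image in $\over{\cG}$ is a trivial loop. For the indecomposable types $A_3$ and $B_3$, Theorem~\ref{14gon} directly identifies the quotient cycles as an octagon and a 14-gon, respectively. Combining these contributions gives that $\pi_1(\over{\cG})$ is generated by squares, octagons and 14-gons as claimed.

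The only step requiring any explicit verification is the decomposable rank 3 case. The key observation there is that every non-commutative move in such a cycle uses only the two letters of the $A_2$ or $B_2$ factor, while the third letter from the $A_1$ factor only participates in commutative moves; this forces all non-commutative edges of the cycle to join the same pair of classes in $\over{\cG}$, so the whole loop contracts. Everything else is automatic from Tits' Lemma and the explicit classification already established in Example~\ref{ExRank3} and Theorem~\ref{14gon}.
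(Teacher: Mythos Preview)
Your proposal is correct and follows essentially the same approach as the paper: apply Tits' Lemma, pass to the quotient $\over{\cG}$, and analyze what each type of generator becomes. The paper's own proof is terser---it only lists the outcomes for the five rank~3 subtypes and leaves both the type~(a) squares and the surjectivity of $p_*:\pi_1(\cG)\to\pi_1(\over{\cG})$ implicit---so your explicit case split for the squares and your remark on surjectivity fill in details the paper takes for granted.
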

\begin{proof}The only things to check is that for the rank $3$ cycles, after quotienting the commutative Coxeter moves, the index forms the subtypes
\begin{itemize}
\item $A_1\x A_1 \x A_1$: trivial
\item $A_2\x A_1$: single edge
\item $A_3$: octagon
\item $B_2\x A_1$ single edge
\item $B_3$: 14-gon
\end{itemize}
by Theorem \ref{14gon}.
\end{proof}

\section{Proof of Theorem \ref{Tits}}\label{sec:TitsPf}
We are ready to give a new, constructive proof of Tits' Lemma by producing an explicit decomposition of a given reduced word cycle. One can consider this argument as explicitly spelling out and simplifying the original abstract proof presented in \cite[Section 4]{Tits} using homotopies and morphisms of graphs.

\begin{Rem}Throughout the proof, all the closed paths are not assumed to be simple, but self intersection naturally breaks the paths into unions of smaller cycles, as well as trivial cycles of the form $\c\inv\circ\c$ in the fundamental groupoid $\pi_1(\cG)$. Therefore without loss of generality we may just present pictures of simple paths.
\end{Rem}

Let $\b\in \pi_1(\cG(w))$ be an oriented closed path in $\cG(w)$. We first make two elementary observations:
\begin{itemize}
\item[(*)] If there exists a path $\d$ cutting $\b$ into two components $\b_1, \b_2$, then $\b$ is generated by $\{\a_1,...,\a_n\}$ if both closed paths $\b_1\d$ and $\d\inv \b_2$ are generated by $\{\a_1,...,\a_n\}$ (in the sense of Definition \ref{fundgp}).
$$\begin{tikzpicture}[every node/.style={inner sep=0, minimum size=0cm, thick},  x=1cm, y=1cm]
\node (1) at (1,1){$*$};
\node (2) at (2,2){};
\node (3) at (2,5){};
\path(1) edge[-, bend right = 40] (2);
\path (3) edge[->-] node [left] {$\d$}(2);
\path (2) edge[->-, bend left =90] node[left]{$\b_1$}(3);
\path (3) edge[->-, bend left =90] node[left]{$\b_2$}(2);
\end{tikzpicture}$$
\item[(**)]  If $\b$ is a closed path such that all the words (vertices) share the same letter $i\in I$ in the last position, then $l(ws_i)<l(w)$, and $\b$ is isomorphic to a closed path $\over{\b}$ of $\cG(ws_i)$ where the vertex set consists of the same words with all the last letter removed. 

Conversely if $\over{\b}$ is a closed path in $\cG(w)$ and $l(ws_i)>l(w)$, then $\over{\b}$ is isomorphic to a closed path $\b$ in $\cG(ws_i)$ by adding the letter $i$ at the end of each word of $\over{\b}$.
\end{itemize}

Define the \emph{last letter map} $\w:\cI(w)\to I$ by 
\Eq{
\bi=(i_1,..,i_N)\mapsto i_N.} For any path $\b$ of $\cG$ we will also identify it as a subset of $\cG_0$ with its collection of vertices (words). Define \Eq{
m_i(\b)=\#\{\bi\in \b: \w(\bi)=i\}
}
to be the number of words in $\b$ ending with the letter $i$.

We are now ready to explain the proof of Theorem \ref{Tits} by induction. We will use a double induction on $n=l(w)$ and the number $m=|\w(\b)|$ of distinct last letter appearing in the words of $\b$.

For the base case of $n=3$, this is just a restatement of the type (b) cycles.

For the base case of $m=1$, every word $\bi\in\b$ is of the form $\bi=(..., i)$ for some letter $i\in I$. In particular $l(ws_i)<l(w)$ and hence by (**) the cycle is isomorphic to a cycle $\over{\b}$ in $\cG(ws_i)$, and we can use induction on $n$ to decompose $\over{\b}$, which by the converse of (**) also decompose $\b$.

Now assume $n>3$ and $m>1$. Fix an index $i\in \w(\b)$. Then by the definition of reduced word graph, there exists vertices $\bi_1, \bi_1, \bi_2', \bi_2'$ of $\b$ such that
\begin{itemize}
\item $\bi_1$ is of the form $(..., \bp_{ji})$ and $\bi_1'$ of the form $(..., \bp_{ij})$ for some letter $j\neq i$, and they are related by a single Coxeter move $C_{ij}$ involving the last letter.
\item $\bi_2$ is of the form $(..., \bp_{ki})$ and $\bi_2'$ of the form $(..., \bp_{ik})$ for some letter $k\neq i$, and they are related by a single Coxeter move $C_{ik}$ involving the last letter.
\item There exists a subpath $\b'$ of $\b$ joining $\bi_1$ and $\bi_2$, such that $\w(\bi)=i$ for any $\bi\in \b'$, see Figure \ref{bbpath}.
\end{itemize}
In particular, $l(ws_i)=l(ws_j)=l(ws_k)<l(w)$.
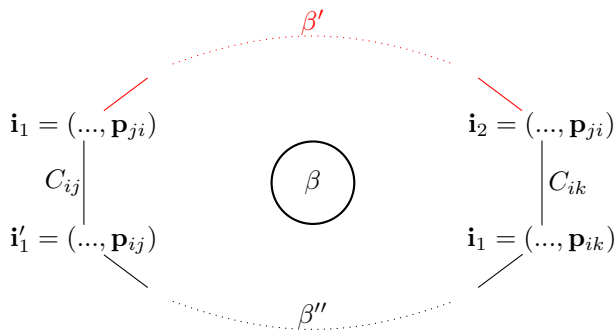
\begin{figure}[H]
\centering
\begin{tikzpicture}[every node/.style={inner sep=0, minimum size=0.3cm, thick},  x=2cm, y=1.5cm]
\node (0a) at (0.5, 1.5){};
\node (0b) at (2.5, 1.5){};
\node (1) at (0,1){$\bi_1=(..., \bp_{ji})$};
\node (2) at (3,1){$\bi_2=(..., \bp_{ji})$};
\node (3) at (0,0){$\bi_1'=(..., \bp_{ij})$};
\node(4) at (3,0){$\bi_1=(..., \bp_{ik})$};
\node(5a) at (0.5,-0.5){};
\node(5b) at (2.5,-0.5){};
\node at (1.5,0.5)[draw, circle,minimum size=0.6cm,inner sep=0.25cm]{$\b$};
\path(1) edge[-, red]  (0a);
\path(2) edge[-, red]  (0b);
\path(0a) edge[-, bend left = 20, dotted, red] node [above] {$\b'$} (0b);
\path(1) edge[-] node [left] {$C_{ij}$} (3);
\path(2) edge[-] node [right=2pt] {$C_{ik}$} (4);
\path(3) edge[-]  (5a);
\path(4) edge[-]  (5b);
\path(5a) edge[-, bend right = 20, dotted] node [above] {$\b''$} (5b);
\end{tikzpicture}
\caption{Decomposition of the cycle $\b$. All the vertices appearing in the red path end with the letter $i$.\label{bbpath}}
\end{figure}

If we let $J=\{i,j,k\}$ and consider the longest element $w_J\in W_J$, then by construction $l(ww_J)<l(w)$. Hence if $\over{\bi^k}\bp_{ij}$ is a reduced word for $w_J$, then there exists $\bj_j, \bj_j'\in\cI(w)$ which coincide except the last portion where they end differently with $\over{\bi^k}\bp_{ji}$ and $\over{\bi^k}\bp_{ij}$ respectively. Similarly for $\bj_k, \bj_k'$ which is the same as $\bj_j,\bj_j'$ except the last portion with $j\corr k$. 

Note that we allow $j=k$ in which case $\over{\bi^j}=\over{\bi^k}$ is empty, and $\bj_j,\bj_j'$ coincides with $\bj_k,\bj_k'$ respectively.

Since $\bi_1$ and $\bj_j$ both ends with $\bp_{ji}$, there exists a sequence of Coxeter moves $\cC:=\{C_l\}$ joining $\bi_1$ to $\bj_j$ which does not involve the $\bp_{ji}$ portion and commute with $C_{ij}$. The same sequence $\cC$ also transform $\bi_1'$ into $\bj_j'$, hence they form a sequence of commuting squares as in Figure \ref{comsq}. Similarly for $\bi_2, \bi_2'$ to $\bj_k$, $\bj_k'$.

\begin{figure}[H]
\centering
\begin{tikzpicture}[every node/.style={inner sep=0, minimum size=0.3cm, thick},  x=3cm, y=1.5cm]
\node (1) at (0,1){$\bi_1=(..., \bp_{ji})$};
\node (2) at (1,1){$(..., \bp_{ji})$};
\node (3) at (2,1){$\cdots$};
\node(4) at (3,1){$\bj_j=(..., \over{\bi_k}\bp_{ji})$};
\node (5) at (0,0){$\bi_1'=(..., \bp_{ij})$};
\node(6) at (1,0){$(..., \bp_{ij})$};
\node (7) at (2,0){$\cdots$};
\node(8) at (3,0){$\bj_j'=(..., \over{\bi_k}\bp_{ij})$};
\path(1) edge[-] node[above]{$C_1$}  (2);
\path(2) edge[-] node[above]{$C_2$}  (3);
\path(3) edge[-]  (4);
\path(5) edge[-] node[below]{$C_1$}  (6);
\path(6) edge[-] node[below]{$C_2$}  (7);
\path(7) edge[-]  (8);
\path(1) edge[-]  node[left] {$C_{ij}$}(5);
\path(2) edge[-]  node[left] {$C_{ij}$}(6);
\path(3) edge[-]  node[left] {$C_{ij}$}(7);
\path(4) edge[-]  node[right] {$C_{ij}$}(8);
\end{tikzpicture}
\caption{Commuting squares from $\bi_1,\bi_1'$ to $\bj_j, \bj_j'$. \label{comsq}}
\end{figure}

If $j\neq k$, by the explicit form in Example \ref{ExRank3} and Theorem \ref{14gon}, there exists a rank 3 cycle, that contains all $\bj_j$, $\bj_j'$, $\bj_k$, $\bj_k'$ in such a way that $\d$ joins $\bj_j$ and $\bj_k$ with $\w(\bi)=i$ for any $\bi\in \d$, while $\d'$ joins $\bj_j'$ and $\bj_k'$ with $\w(\bi)\neq i$ for any $\bi\in \d'$, see Figure \ref{bigcycle}. The cycle $C_{ij}\d C_{ik}\d'$ is then a rank 3 cycle of type (b). If $j=k$ then this part of the construction is trivial. 

The other cycle $\b_\#$ form by $\b'$ and $(\bi_1-\cdots-\bj_j -\overset{\d}{\cdots} - \bj_k-\cdots-\bi_2)$ all have words ending with the letter $i$, hence by (**) we can consider the last letter removed and apply induction on $n$ to decompose $\b_\#$. Therefore by (*), we are reduced to consider the remaining cycle $\b_*$ form by $\b''$ and $(\bi_1'-\cdots-\bj_j' -\overset{\d'}{\cdots} - \bj_k'-\cdots-\bi_2')$.

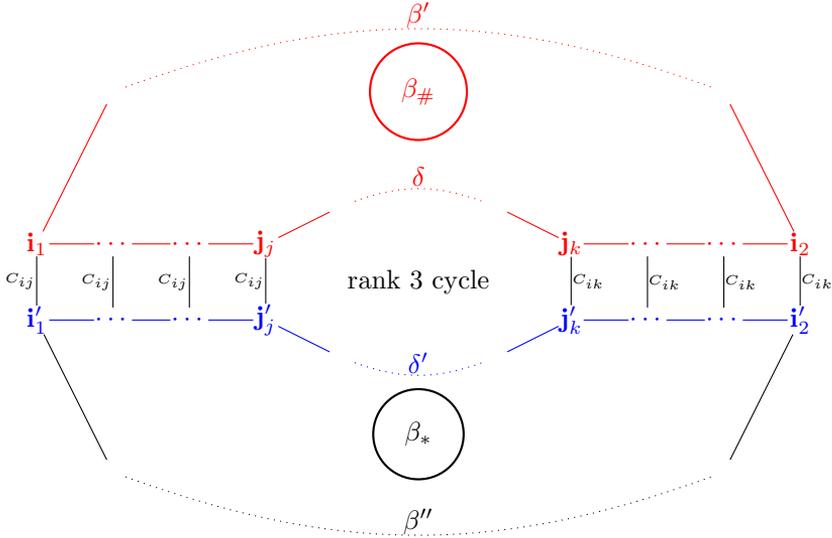
\begin{figure}[H]
\centering
\begin{tikzpicture}[every node/.style={inner sep=0, minimum size=0.3cm, thick},  x=1cm, y=1cm]
\node (0a) at (1, 4){};
\node (0b) at (9, 4){};
\node (1a) at (0,2)[red]{$\bi_1$};
\node (1b) at (1,2)[red]{$\cdots$};
\node (1c) at (2,2)[red]{$\cdots$};
\node (1d) at (3,2)[red]{$\bj_j$};
\node (2a) at (7,2)[red]{$\bj_k$};
\node (2b) at (8,2)[red]{$\cdots$};
\node (2c) at (9,2)[red]{$\cdots$};
\node (2d) at (10,2)[red]{$\bi_2$};
\node (3a) at (0,1)[blue]{$\bi_1'$};
\node (3b) at (1,1)[blue]{$\cdots$};
\node (3c) at (2,1)[blue]{$\cdots$};
\node (3d) at (3,1)[blue]{$\bj_j'$};
\node(4a) at (7,1)[blue]{$\bj_k'$};
\node(4b) at (8,1)[blue]{$\cdots$};
\node(4c) at (9,1)[blue]{$\cdots$};
\node(4d) at (10,1)[blue]{$\bi_2'$};
\node(5a) at (1,-1){};
\node(5b) at (9,-1){};
\node(6a) at (4,2.5){};
\node(6b) at (6,2.5){};
\node(7a) at (4,0.5){};
\node(7b) at (6,0.5){};
\node at (5,4)[draw, circle,minimum size=0.6cm,red, inner sep=0.25cm]{$\b_\#$};
\node at (5,-0.5)[draw, circle,minimum size=0.6cm,inner sep=0.25cm]{$\b_*$};
\node at (5,1.5){rank 3 cycle};
\path(1a) edge[-,red]  (0a);
\path(2d) edge[-,red]  (0b);
\path(1a) edge[-,red] (1b);
\path(1b) edge[-,red] (1c);
\path(1c) edge[-,red] (1d);
\path(2a) edge[-,red] (2b);
\path(2b) edge[-,red] (2c);
\path(2c) edge[-,red] (2d);
\path(3a) edge[-,blue] (3b);
\path(3b) edge[-,blue] (3c);
\path(3c) edge[-,blue] (3d);
\path(4a) edge[-,blue] (4b);
\path(4b) edge[-,blue] (4c);
\path(4c) edge[-,blue] (4d);
\path(1a) edge[-] node[left]{\tiny$C_{ij}$}(3a);
\path(1b) edge[-] node[left]{\tiny$C_{ij}$}(3b);
\path(1c) edge[-] node[left]{\tiny$C_{ij}$}(3c);
\path(1d) edge[-] node[left]{\tiny$C_{ij}$}(3d);
\path(2a) edge[-] node[right]{\tiny$C_{ik}$}(4a);
\path(2b) edge[-] node[right]{\tiny$C_{ik}$}(4b);
\path(2c) edge[-] node[right]{\tiny$C_{ik}$}(4c);
\path(2d) edge[-]  node[right]{\tiny$C_{ik}$}(4d);
\path(1d) edge[-,red] (6a);
\path(6b) edge[-,red] (2a);
\path(3d) edge[-,blue] (7a);
\path(7b) edge[-,blue] (4a);
\path(0a) edge[-, bend left = 20, dotted,red] node [above] {$\b'$} (0b);
\path(6a) edge[-, bend left = 20, dotted,red] node [above] {$\d$} (6b);
\path(3a) edge[-]  (5a);
\path(4d) edge[-]  (5b);
\path(5a) edge[-, bend right = 20, dotted] node [above] {$\b''$} (5b);
\path(7a) edge[-, bend right = 20, dotted,blue] node [above] {$\d'$} (7b);
\end{tikzpicture}
\caption{Decomposition of $\b$,where all the vertices along the red cycle end with the letter $i$, and all the vertices along the blue path does not end with letter $i$. \label{bigcycle}}
\end{figure}

Note that we now have $m_i(\b_*)<m_i(\b)$, hence by applying the same argument on $\b_*$ repeatedly, we reduce to the case where $i\notin\w(\b_*)$ so that $m$ decreases by at least $1$, and we can apply induction on $m$ to decompose $\b_*$ completely.

We remark that the $\bi''$ part of the rank 3 cycles in type (b) of Theorem \ref{Tits} comes from the letters removed during the induction step when $m=1$ and $n\mapsto n-1$. This conclude the proof of Theorem \ref{Tits}.

\section{Proof of Theorem \ref{mainThm}}\label{sec:mainPf}
We have the obvious property concerning operators defined over loops. Let $\{\cX^\bi\}_{\bi\in \cG_0}$ be a collection of algebras indexed by the vertices of $\cG$. Assume that for any oriented edge $\be: \bi_1\to \bi_2$ of $\cG$, we have a homomorphism $f_\be\in \mathrm{Hom}(\cX^{\bi_1},\cX^{\bi_2})$. For any oriented path $\a=(\be_1,...,\be_n)$ of $\cG$, let $f_\a:=f_{\be_n}\circ\cdots\circ f_{\be_1}$. 

\begin{Lem} Assume that $f_{\be\inv}\circ f_{\be}=\Id_{\cX_{\bi_1}}$ for any oriented edge $\be:\bi_1\to \bi_2$. If $\pi_1(\cG)$ is generated by closed paths $\a_1,...,\a_n$, and $f_{\a_i}=\Id$ for any $i$, then $f_\b=\Id$ for any closed path $\b$.
\end{Lem}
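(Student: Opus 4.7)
The plan is to show that the assignment $\a \mapsto f_\a$ descends to a representation of the fundamental groupoid $\pi_1(\cG)$, and then conclude using the generation hypothesis on a fixed basepoint.

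First I would verify that each $f_\be$ is a two-sided inverse of $f_{\be^{-1}}$. The hypothesis $f_{\be^{-1}}\circ f_\be=\Id$, applied to the oriented edge $\be^{-1}$ in place of $\be$, also yields $f_\be\circ f_{\be^{-1}}=\Id$. Consequently, for any path $\a=(\be_1,\ldots,\be_n)$, the composition $f_\a$ is an algebra isomorphism with inverse $f_{\a^{-1}}$, obtained by cancelling inverse pairs from the middle outward.

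Next I would show that $f_\a$ is invariant under insertion and deletion of backtracks $\be\circ\be^{-1}$. Indeed, for paths $\a,\a'$ composable around an edge $\be$,
\[
f_{\a\,\circ\,(\be\,\circ\,\be^{-1})\,\circ\,\a'} = f_{\a'}\circ f_{\be^{-1}}\circ f_\be\circ f_\a = f_{\a'}\circ f_\a = f_{\a\circ\a'}.
\]
Since homotopy of paths in a graph (relative to endpoints) is generated precisely by such backtrack moves, this proves that $f_\a$ depends only on the homotopy class of $\a$. Hence $f$ defines a groupoid homomorphism from $\pi_1(\cG)$ into the groupoid of algebra isomorphisms $\{\cX^\bi\}$, and in particular restricts, for any basepoint $v$, to a group homomorphism $\pi_1(\cG,v)\to\mathrm{Aut}(\cX^v)$.

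Finally, fix a closed path $\b$ and let $v$ be its basepoint. By Definition \ref{fundgp}, the group $\pi_1(\cG,v)$ is generated by elements of the form $\c^{-1}\circ \a_i\circ\c$, where $\c$ is a path from $v$ to some vertex of $\a_i$. For any such generator,
\[
f_{\c^{-1}\,\circ\,\a_i\,\circ\,\c} \;=\; f_{\c^{-1}}\circ f_{\a_i}\circ f_\c \;=\; f_\c^{-1}\circ \Id\circ f_\c \;=\; \Id,
\]
using the hypothesis $f_{\a_i}=\Id$. Since $\b$ is a product of such generators and their inverses in $\pi_1(\cG,v)$, and since $f$ is a group homomorphism on $\pi_1(\cG,v)$ by the previous paragraph, we conclude $f_\b=\Id$. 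The proof is essentially formal; the only genuine content is the backtrack invariance, which is precisely what makes the construction descend to the fundamental groupoid.
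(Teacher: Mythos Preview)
Your proof is correct and follows essentially the same approach as the paper: compute $f_{\c^{-1}\circ\a_i\circ\c}=f_{\c^{-1}}\circ f_{\a_i}\circ f_\c=f_{\c^{-1}}\circ f_\c=\Id$ and conclude since every closed path is generated by such conjugates. Your argument is simply more careful than the paper's two-line version, making explicit the backtrack invariance (so that $f$ descends to $\pi_1(\cG)$) and the fact that $f_{\be^{-1}}$ is a two-sided inverse of $f_\be$, both of which the paper leaves implicit.
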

\begin{proof} We have
$$f_{\c\inv\a_i\c}=f_{\c\inv}\circ f_{\a_i}\circ f_{\c}=f_{\c\inv}\circ f_{\c} = \Id.$$ 
Since any closed path $\b$ is generated by $\c\inv\circ \a_i\circ \c$, the claims follow.
\end{proof}

Therefore to complete the proof of the Main Theorem \ref{mainThm}, we just need to check that the sequence of quantum cluster mutations along any commutative squares and any rank 3 cycles are trivial. The case of commutative squares is straightforward since it is just a change of index of the cluster variables. On the other hand, since the identity \eqref{uCuC} is satisfied for the classical relations due to Lusztig's Injectivity Lemma applied to the classical Bruhat cells of type $A_3$ or $B_3$, the quantum version is a consequence of the following result due to \cite{FG3}, see also \cite[Proposition 3.4]{KN}.

\begin{Thm}\label{Ymut}$(k_1,...,k_n)$ is a $\s$-period of the quantum $Y$-seed $(B,\bY)$ if and only if it is a $\s$-period of the classical $y$-seed $(B,y)$.
\end{Thm}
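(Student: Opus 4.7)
My plan is to split the biconditional into the two directions. The ``only if'' direction is immediate: the classical $y$-seed mutation is recovered from the quantum $Y$-seed mutation by specializing $q\to 1$ and commuting variables, and this specialization preserves any identity, so a $\s$-period of $(B,\bY)$ descends to one of $(B,y)$. The substance of the theorem is therefore the ``if'' direction.

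For the forward direction, the plan is to exploit the decomposition $\mu_k^q=\mu_k^\#\circ \mu_k^m$ from Section \ref{sec:pre:qta}, where $\mu_k^m$ is the monomial (tropical) transformation and $\mu_k^\#=\mathrm{Ad}_{\Psi^{q_k}(X_k)}$. The key point is that $\mu_k^m$ acts only on the exponent lattice $\L_\bQ$, and its action there coincides exactly with the classical tropical $y$-mutation, hence encodes precisely the $c$-vector data (with respect to the coframing, as in the proof of Proposition \ref{propmon}). Since classical $\s$-periodicity is equivalent to $\s$-periodicity of the tropical $y$-pattern, the hypothesis gives for free that $\mu_{k_n}^m\circ\cdots\circ\mu_{k_1}^m$ realizes exactly the permutation $\s^*$ of the quantum torus variables.

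It remains to show that once we factor out the monomial parts, the composite of conjugations collapses to the identity. Reorganizing using $\mu_k^q=\mu_k^\#\circ\mu_k^m$ and conjugating each $\mu_{k_j}^\#$ through the preceding monomial transformations, the leftover composition is $\mathrm{Ad}$ of a product $\prod_j \Psi^{q_{k_j}}(X_{c_j})^{\epsilon_j}$, where the $c_j$ are the $c$-vectors along the mutation path and $\epsilon_j\in\{\pm 1\}$ are sign coherent tropical signs read off from the classical $y$-seed. The required statement is that this product equals $1$ in $\bT_q^\bQ$.

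The main obstacle is precisely this quantum dilogarithm identity. The strategy I would follow is Keller--Nakanishi's: the classical $\s$-period translates into a tropical dilogarithm identity (a relation among Euler characteristics of quiver representations for the associated stability condition), which admits a canonical quantum upgrade term-by-term, and the resulting identity among products of $\Psi^{q_k}$ reduces inductively to the pentagon identity for the quantum dilogarithm. An alternative, more direct route (sufficient for our application) is to observe that in the rank $3$ cases $A_3$ and $B_3$ needed for Theorem \ref{mainThm}, the mutation paths are short enough that one can verify the quantum dilogarithm identity by an explicit, finite calculation, as will be carried out in Section \ref{sec:mainPf}.
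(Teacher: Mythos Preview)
The paper does not actually prove Theorem \ref{Ymut}: it is stated as a result of \cite{FG3} (see also \cite[Proposition 3.4]{KN}), and the author explicitly remarks that ``we could not find a direct proof of this Theorem.'' Instead of proving it, the paper \emph{bypasses} Theorem \ref{Ymut} entirely by giving a direct computational verification of the identity \eqref{uCuC} for the specific rank 3 cycles ($A_3$ and $B_3$) that are needed, using the tricks of Section \ref{sec:mainPf:check} with universally Laurent polynomials. So there is no ``paper's own proof'' of this statement to compare against.

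Your outline is a faithful sketch of the standard argument in the literature: the decomposition $\mu_k^q=\mu_k^\#\circ\mu_k^m$, the identification of the monomial part with tropical $y$-mutation (hence $c$-vectors), and the reduction of the residual conjugation to a quantum dilogarithm identity is exactly the Fock--Goncharov/Kashaev--Nakanishi framework. However, you should be aware that the final step --- that the product $\prod_j \Psi^{q_{k_j}}(X_{c_j})^{\epsilon_j}$ equals $1$ --- is the entire content of the theorem, and your proposal does not prove it: you defer either to the literature or to the same explicit rank 3 computation the paper carries out. (Also, the reference should be Kashaev--Nakanishi \cite{KN}, not Keller--Nakanishi; and the claim that the identity ``reduces inductively to the pentagon identity'' is not how \cite{FG3} proceeds --- their argument uses that the adjoint action of the product is trivial and then invokes a semiclassical/intertwiner argument rather than a pentagon reduction.) In effect your ``alternative route'' and the paper's approach coincide, while your ``main route'' remains a citation rather than a proof.
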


While this completes the proof of Theorem \ref{mainThm}, in order to be self-contained (and to avoid another mis-citation since we could not find a direct proof of this Theorem), we find it illuminating to provide an alternative, direct computational proof of the quantum identity \eqref{uCuC} over any 3-cycles.

\subsection{Algorithms for Explicit Checking}\label{sec:mainPf:check}
More precisely, fix any reduced word $\bi_0$ of $w_0$, and let 
\Eq{\mu_{i_m}\circ\cdots\circ \mu_{i_1}:\bQ(\bi_0)\to \bQ(\bi_0)
}
be a mutation sequence corresponding to a rank 3 cycle. Then we have to check that
\Eq{
\label{maineq}\mu_{i_1}^*\circ\cdots \circ \mu_{i_m}^*(X_i) = X_{\s(i)} \in \cX_q^{\bi_0}
}
where $\s$ is a permutation of index corresponding to the overall effects involving commutative Coxeter moves. 

In principle, the checking can be done explicitly, which involves \emph{huge} rational expressions in the quantum cluster variables of $\bT_q^{\bi_0}$ and it is very hard to present. More precisely, during the sequence of cluster mutations, the initial variable $X_i\in \cX_q^{\bi_0}$ will be mutated to a series of rational expressions, followed by factorization of the numerators and denominators where some factors canceled and the expression simplified, finally ending again at a simple monomial $X_{\s(i)}$. While it is OK to write down all such expressions explicitly in type $A_3$, the expressions become \emph{very complicated} in type $B_3$, and the factorization of non-commutative rational expressions could not be done effectively by simple computer program with symbolic computation. 

To aid with the presentation in order to avoid putting the calculations into a huge appendix, we employ several tricks to simplify our verification. These tricks will also shed light onto some general principles in calculation involving quantum cluster mutations.

\textbf{Trick 1:} To aid the bookkeeping of index, while \eqref{maineq} is obtained from pullback of the mutation sequence so that the order of operators are reversed, we can make use of the fact that $\mu_k^*$ are involution, hence by taking inverse we can deduce that \eqref{maineq} holds if and only if the forward version hold:
\Eq{
\label{id}\mu_{i_m}^*\circ\cdots \circ \mu_{i_1}^*(X_i) = X_{\s\inv (i)},\tab \forall X_i\in\cX_q^{\bi_0}
}
or in other words
\Eq{
\label{id2} \s^*\circ\mu_{i_m}^*\circ\cdots \circ \mu_{i_1}^*(X_i) = \Id_{\cX_q^{\bi_0}}
}
which is the identity that we will be checking. Also recall that if we use the \emph{canonical labeling} of the quiver $\bQ(\bi_0)$, then $\s^*$ becomes the identity map on $\cX_q^{\bi_0}$.

\textbf{Trick 2:} Let $P(X_{i_1},...,X_{i_k})$ be a Laurent polynomial which is linear in $X_{i_k}$ or $X_{i_k}\inv$. If the identity \eqref{id2} holds for $X_{i_1},...,X_{i_{k-1}}$ and $P(X_{i_1},...,X_{i_{k}})$, then the identity holds for $X_{i_{k}}$ since it is a rational expression of the former and \eqref{id2} is a homomorphism of $\bT_q^{\bi_0}$. In particular, the verification becomes very simple if the polynomial $P$ arises from quantum group embedding $\fD_q\inj \cX_q^{\bi_0}$ as in \cite{Ip7}.

\textbf{Trick 3:} If $X_\l$ is a \emph{standard monomial}, i.e. on $\L_\bQ$ we have
\Eq{\label{stdmon}
(\l, \vec{e_k})\leq 0,\tab \forall k\in Q\setminus Q_0,
}
then its image is universally Laurent under any mutation sequence \cite{GS}, and the calculation can be checked (by hand) and presented effectively.

We remark that even though Trick 3 utilizes the properties of standard monomials as universally Laurent polynomials, the calculation is just conveniently chosen to simplify the expression, and is independent of the proofs of those properties presented in \cite{GS}.

\subsection{Type $A_3$}\label{sec:mainPf:A3}
Let $\g$ be type $A_3$. We will use the labeling in Figure \ref{A3quotient} for our seeds, so that the cluster variables $X_i^{(k)}\in \cX_q^{\bi_k}$. (We will omit the bracket for $\bi_0$). Recall that if $\bi\sim \bi'$ in the commutative class, then the quiver $\bQ(\bi)=\bQ(\bi')$ up to relabeling the index.

The quiver corresponding to the initial seed $\bi_0=(121321)$ is labeled as in Figure \ref{A3quiver}.
\begin{figure}[H]
\centering
\begin{tikzpicture}[every node/.style={inner sep=0, minimum size=0.5cm, thick, fill=white, draw}, x=1cm, y=1cm]
\node (1) at (0,0) {$1$};
\node (2) at (2,0) [circle]{$2$};
\node (3) at (4,0) [circle]{$3$};
\node (4) at (6,0)  {$4$};
\node (5) at (1,1) {$5$};
\node (6) at (3,1)  [circle]{$6$};
\node (7) at (5,1)  {$7$};
\node (8) at (2,2)  {$8$};
\node (9) at (4,2)  {$9$};
\drawpath{1,2,3,4}{}
\drawpath{5,6,7,3,6,2,5}{}
\drawpath{8,9,6,8}{}
\drawpath{8,5,1}{dashed}
\drawpath{4,7,9}{dashed}
\end{tikzpicture}
\caption{Basic quiver in type $A_3$.\label{A3quiver}}
\end{figure}
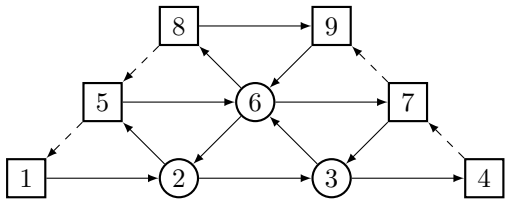
Then the rank 3 cycles corresponds to mutation at the vertices $2,6,3,2,6,3,2,6$ sequentially, and we have
\Eq{
\mu_6\circ\mu_2\circ\mu_3\circ\mu_6\circ\mu_2\circ\mu_3\circ\mu_6\circ\mu_2=\s
}
where $\s=(263)$ is the permutation changing the index $(2,3,6)$ into $(6,2,3)$ in the final quiver. 

\begin{Rem}\label{finite} In fact the mutable part of $Q(\bi_0)$ is cluster finite $A_3$ type, hence we know \emph{a priori} that there are finitely many (quantum) cluster variables, and the periodicity of the mutation sequence is automatic from the general theory of quantum cluster algebra. Nonetheless, we will give a direct computational proof of the identity.
\end{Rem}

Recall \cite[Theorem 4.14]{Ip7} that the image of the Chevalley generators of the lower Borel subalgebra $\cU_q(\fb_-)$ under the Coxeter moves depend only the reduced words $\bi\in\cI(w_0)$ and not on any choice of mutations. The image of the generators $\bf_i$ are given by a telescopic sum along the horizontal arrows of the basic quivers $\bQ(\bi)$ (with the right-most vertices removed), while $\bK_i'$ are the corresponding cluster monomials. Hence the identity \eqref{id2} holds automatically for these expressions. Explicitly they are given by:
\Eq{
&\bf_1=X_1+X_{1,2}+X_{1,2,3},&&\bK_1'=X_{1,2,3,4}\\
& \bf_2=X_5+X_{5,6},&& \bK_2'=X_{5,6,7}\nonumber\\
&\bf_3=X_8,&&\bK_3'=X_{8,9}\nonumber.
}
Also we observe that the monomials
\Eq{
C_1=X_{2,3,6},\tab C_2=X_{1,5,8}
}
commute with all the mutable variables $\{X_2,X_3,X_6\}$ and invariant under $\s$, therefore by Proposition \ref{propmon} they remain monomials throughout the sequence of cluster mutations and the identity \eqref{id} holds automatically.

By Trick 3, we claim that it is enough to check the identity \eqref{id} for the following \emph{standard monomial} $X_{5,8}$:

\Eqn{
X_{5,8}&\mapsto_{\mu_2^*}X_{5,8}^{(1)}+X_{2,5,8}^{(1)}\\
&\mapsto_{\mu_6^*}X_{5,8}^{(2)}+X_{5,6,8}^{(2)}+X_{2,5,6,8}^{(2)}\\
&\mapsto_{\mu_3^*}X_{5,8}^{(3)}+X_{5,6,8}^{(3)}+X_{2,5,6,8}^{(3)}+X_{2,3,5,6,8}^{(3)}\\
&\mapsto_{\mu_2^*}X_{2,5,8}^{(4)}+X_{2,5,6,8}^{(4)}+X_{2,3,5,6,8}^{(4)}\\
&\mapsto_{\mu_6^*}X_{2,5,8}^{(5)}+X_{2,3,5,8}^{(5)}\\
&\mapsto_{\mu_3^*}X_{2,5,8}^{(6)}\\
&\mapsto_{\mu_2^*}X_{5,8}^{(7)}\\
&\mapsto_{\mu_6^*}X_{5,8}.
}

Now it follows by Trick 2 that in the following, if \eqref{id} holds for the elements on the left hand side, then it also holds for the right hand side due to linearity:
\Eqn{
\bf_3&\imply X_8\\
\bK_3' &\imply X_9\\
X_{5,8} &\imply X_5\\
\bf_2&\imply X_{5,6}\imply X_6\\
\bK_2'&\imply X_7\\
C_2&\imply X_1\\
C_1&\imply X_{2,3}\imply X_{1,2,3}\\
X_{1,2,3}, \bf_1&\imply X_1+X_{1,2}\imply X_{1,2}\imply X_2\\
X_{2,3}&\imply X_3\\
\bK_1'&\imply X_4.
}

Hence we conclude that \eqref{id} holds for all the cluster variables $X_i\in\cX_q^{\bi_0}$. This completes the discussion in type $A_3$.

\subsection{Type $B_3$}\label{sec:mainPf:B3}
The situation is much more complicated for type $B_3$, and as explained, huge non-commutative rational expressions appear if we just consider the images of any cluster variables.

To begin, we will choose the initial word $\bi_0=(121232123)$ with the corresponding quiver given in Figure \ref{B3quiver}.

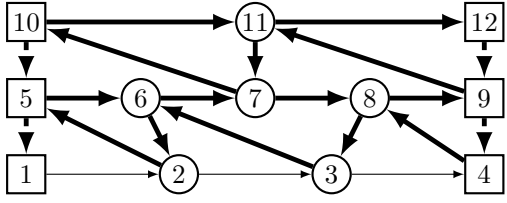
\begin{figure}[H]
\centering
\begin{tikzpicture}[every node/.style={inner sep=0, minimum size=0.5cm, thick, fill=white, draw}, x=0.5cm, y=1cm]
\node (1) at (0,0) {$1$};
\node (2) at (4,0) [circle]{$2$};
\node (3) at (8,0) [circle]{$3$};
\node (4) at (12,0) {$4$};
\node (5) at (0,1) {$5$};
\node (6) at (3,1) [circle]{$6$};
\node (7) at (6,1) [circle]{$7$};
\node (8) at (9,1) [circle]{$8$};
\node (9) at (12,1) {$9$};
\node (10) at (0,2) {$10$};
\node (11) at (6,2) [circle]{$11$};
\node (12) at (12,2) {$12$};
\drawpath{1,2,3,4}{}
\drawpath{4,8,3,6,2,5}{vthick}
\drawpath{5,6,7,8,9}{vthick}
\drawpath{9,11,7,10}{vthick}
\drawpath{10,11,12}{vthick}
\drawpath{10,5,1}{dashed,vthick}
\drawpath{12,9,4}{dashed,vthick}
\end{tikzpicture}
\caption{Basic quiver in type $B_3$.\label{B3quiver}}
\end{figure}

\begin{Rem} The mutable part of this quiver is of finite-mutation type $F_4^{(*,*)}$, but it is not finite type \cite[Table 6]{FST}. In particular there will be infinitely many cluster variables, therefore the same argument in Remark \ref{finite} of type $A_3$ using finitely many cluster variables do not apply in this case.
\end{Rem}

The rank 3 cycle of Coxeter moves corresponds to a sequence of 26 mutations from $\bi_0$ given sequentially by
\Eq{
\underline{2,6,2},\underline{3,8,3},7,11,\underline{2,8,2},6,7,\underline{3,11,3},\underline{2,8,2},7,6,\underline{3,8,3},11,7.
}
Here the cluster mutations of the underlined triplets correspond to the Coxeter moves involving the short index as in \eqref{quivermove2}. Furthermore, in type $B_3$, it turns out that we do not need any extra permutation $\s$ to complete the cycle.

To check the identity \eqref{id}, again it is satisfied automatically by some special elements of $\cX_q^{\bi_0}$, including the image of the generators of $\cU_q(\fb_-)$ expressed by the cluster variables along the horizontal arrows of $\bQ(\bi)$:

\Eq{
&\bf_1=X_1+X_{1,2}+X_{1,2,3},&&\bK_1'=X_{1,2,3,4},\\
&\bf_2=X_5+X_{5,6}+X_{5,6,7}+X_{5,6,7,8},&&\bK_2'=X_{5,6,7,8,9},\nonumber\\
&\bf_3=X_{10}+X_{10,11},&&\bK_3'=X_{10,11,12},\nonumber
}
as well as some monomials that commute with all the mutable variables:
\Eq{
C_1=X_{2^2,7^{-2},8^{-1},9^{-2},11^{-1}}, && C_2=X_{3^2,6,7^2,8,12}, && C_3=X_{4^2,8,9^2,11,12^2},
}
which remain monomials during the sequence of cluster mutations and satisfy \eqref{id} automatically by Proposition \ref{propmon}.

We claim that it suffices to check that the identity \eqref{id} holds for the following monomials in $\cX_q^{\bi_0}$.
\Eq{
\label{4element}
X_{1}\inv,\tab X_{12}, \tab X_{9,12},\tab X_{4,9,12},
}
from which the following implications follow by Trick 2 due to linearity:
\Eqn{
X_1\inv&\imply X_1\\
X_{12}, X_{9,12}&\imply X_9\\
X_{4,9,12} &\imply X_4\\
\bK_1'&\imply X_{1,2,3}\\
X_{1,2,3}, \bf_1&\imply X_1+X_{1,2}\imply X_2 \imply X_3\\
X_{12},\bK_3'&\imply X_{10,11}\\
\bf_3, X_{10,11}&\imply X_{10}\imply X_{11}\\
C_3&\imply X_8\\
C_1&\imply X_7\\
C_2&\imply X_6\\
\bf_2&\imply X_5.
}
Hence we conclude the identity \eqref{id} holds for all the cluster variables $X_i\in\cX_q^{\bi_0}$. 

It remains to check the identity \eqref{id} holds for the 4 elements in \eqref{4element}, which are actually \emph{standard monomials} in $\cX_q^{\bi_0}$ by checking \eqref{stdmon} directly. Hence by Trick 3, they are \emph{universally Laurent polynomials} and their mutations can be reasonably computed. In the following, to simplify notations we indicate with the bracket $(k)$ which seed $\cX_q^{\bi_k}$ the elements on the right belong to, following the indexing in Figure \ref{B3quotient}. Also 
we denote by $[2]:=q^{\frac12}+q^{-\frac12}$.

{\tiny\Eqn{
X_1\inv&\mapsto_{\mu_2^*\mu_6^*\mu_2^*}&& X_1\inv+X_{1,6}\inv+X_{1,2,6}\inv&(1)\\
&\mapsto_{\mu_3^*\mu_8^*\mu_3^*}&& X_1\inv+X_{1,6}\inv+X_{1,2,6}\inv+X_{1,2,6,8}\inv+X_{1,2,3,6,8}\inv&(2)\\
&\mapsto_{\mu_7^*}&& X_1\inv+X_{1,6}\inv+X_{1,2,6}\inv+X_{1,6,7}\inv+X_{1,2,6,7}\inv+X_{1,2,6,7,8}\inv+X_{1,2,3,6,7,8}\inv&(3)\\
&\mapsto_{\mu_{11}^*}&&X_{1}\inv+X_{1,6}\inv+X_{1,2,6}\inv+X_{1,6,7}\inv+X_{1,2,6,7}\inv+X_{1,6,7,11}\inv+X_{1,2,6,7,11}\inv+X_{1,2,6,7,8,11}\inv+X_{1,2,3,6,7,8,11}\inv&(4)\\
&\mapsto_{\mu_2^*\mu_8^*\mu_2^*}&&X_1\inv+X_{1,8}\inv+X_{1,2,8}\inv+X_{1,6,8}\inv+X_{1,2,6,8}\inv+X_{1,6,7,8}\inv+X_{1,2,6,7,8}\inv+X_{1,2,6,7,8,11}\inv+X_{1,2,3,6,7,8,11}\inv&(5)\\
&\mapsto_{\mu_6^*}&& X_1\inv+X_{1,8}\inv+X_{1,2,8}\inv+X_{1,7,8}\inv+X_{1,2,7,8}\inv+X_{1,2,7,8,11}\inv+X_{1,2,3,7,8,11}\inv&(6)\\
&\mapsto_{\mu_7^*}&& X_1\inv+X_{1,8}\inv+X_{1,2,8}\inv+X_{1,2,8,11}\inv+X_{1,2,3,8,11}\inv&(7)\\
&\mapsto_{\mu_3^*\mu_{11}^*\mu_3^*}&&X_1\inv+X_{1,8}\inv+X_{1,2,8}\inv&(8)\\
&\mapsto_{\mu_2^*\mu_8^*\mu_2^*}&&X_1\inv&(9)\\
&\mapsto_{\mu_7^*}&&X_1\inv&(10)\\
&\mapsto_{\mu_6^*}&&X_1\inv&(11)\\
&\mapsto_{\mu_3^*\mu_8^*\mu_3^*}&&X_1\inv&(12)\\
&\mapsto_{\mu_{11}^*}&&X_1\inv&(13)\\
&\mapsto_{\mu_7^*}&&X_1\inv\\
}
}
{\tiny
\Eqn{
X_{12}&\mapsto_{\mu_2^*\mu_6^*\mu_2^*}&&X_{12}&(1)\\
&\mapsto_{\mu_3^*\mu_8^*\mu_3^*}&&X_{12}&(2)\\
&\mapsto_{\mu_7^*}&& X_{12}&(3)\\
&\mapsto_{\mu_{11}^*}&&X_{12}+X_{11,12}&(4)\\
&\mapsto_{\mu_2^*\mu_8^*\mu_2^*}&&X_{12} +X_{11,12}+[2]X_{2,11,12}+X_{2^2,11,12}+X_{2^2,8,11,12}&(5)\\
&\mapsto_{\mu_6^*}&& X_{12}+X_{11,12}+[2]X_{2,11,12}+X_{2^2,11,12}+X_{2^2,8,11,12}+X_{2^2,6,8,11,12}&(6)\\
&\mapsto_{\mu_7^*}&& X_{12} + X_{7,12} + X_{7,11,12} + [2] X_{2,7,11,12} +  X_{2^2,7,11,12} + X_{2^2,7,8,11,12} +  X_{2^2,7^2,8,11,12}&(7)\\
&&& + X_{2^2,6,7^2,8,11,12}\\
&\mapsto_{\mu_3^*\mu_{11}^*\mu_3^*}&&X_{12} + X_{7,12} + [2] X_{2,7,12} + X_{2^2,7,12} +  X_{2^2,7,8,12} + X_{7,11,12}+ [2] X_{2,7,11,12} +  X_{2^2,7,11,12} &(8)\\
&&& + [2] X_{2,3,7,11,12}+  [2] X_{2^2,3,7,11,12}+ X_{2^2,3^2,7,11,12} +  X_{2^2,7,8,11,12}+ [2] X_{2^2,3,7,8,11,12} \\
&&& +  X_{2^2,3^2,7,8,11,12} +  X_{2^2,3^2,7^2,8,11,12} +  X_{2^2,3^2,6,7^2,8,11,12}\\
&\mapsto_{\mu_2^*\mu_8^*\mu_2^*}&&X_{12} + X_{7,12} + X_{7,11,12} + [2] X_{3,7,11,12} +  X_{3,7,11,12} + X_{3,7,8,11,12} +  X_{3,7,8,11,12} + X_{3,6,7,8,11,12}&(9)\\
&\mapsto_{\mu_7^*}&&X_{12}+X_{11,12}+[2]X_{3,11,12}+X_{3^2,11,12}+X_{3^2,8,11,12}+X_{3^2,6,8,11,12}&(10)\\
&\mapsto_{\mu_6^*}&&X_{12}+X_{11,12}+[2]X_{3,11,12}+X_{3^2,11,12}+X_{3^2,8,11,12}&(11)\\
&\mapsto_{\mu_3^*\mu_8^*\mu_3^*}&&X_{12}+X_{11,12}&(12)\\
&\mapsto_{\mu_{11}^*}&&X_{12}&(13)\\
&\mapsto_{\mu_7^*}&&X_{12}
}
}
{\tiny
\Eqn{
X_{9,12}&\mapsto_{\mu_2^*\mu_6^*\mu_2^*}&&X_{9,12}&(1)\\
&\mapsto_{\mu_3^*\mu_8^*\mu_3^*}&&X_{9,12}+[2]X_{3,9,12}+X_{3^2,9,12}+X_{3^2,8,9,12}&(2)\\
&\mapsto_{\mu_7^*}&&X_{9,12}+[2]X_{3,9,12}+X_{3^2,9,12}+X_{3^2,8,9,12}+X_{3^2,7,8,9,12}&(3)\\
&\mapsto_{\mu_{11}^*}&&X_{9,11,12}+[2]X_{3,9,11,12}+X_{3^2,9,11,12}+X_{3^2,8,9,11,12}+X_{3^2,8,9,11^2,12}+X_{3^2,7,8,9,11^2,12}&(4)\\
&\mapsto_{\mu_2^*\mu_8^*\mu_2^*}&&X_{9,11,12} + [2] X_{2,9,11,12} +  X_{2^2,9,11,12} + [2] X_{2,3,9,11,12} + [2]X_{2^2,3,9,11,12}+X_{2^2,3^2,9,11,12}&(5)\\
&&&+X_{2^2,8,9,11,12}+[2]X_{2^2,3,8,9,11,12}+X_{2^2,3^2,8,9,11,12}+X_{2^2,3^2,8,9,11^2,12}+X_{2^2,3^2,7,8,9,11^2,12}\\
&\mapsto_{\mu_6^*}&&X_{9,11,12}+[2]X_{2,9,11,12}+X_{2^2,9,11,12}+[2]X_{2,3,9,11,12}+[2]X_{2^2,3,9,11,12}+X_{2^2,3^2,9,11,12}&(6)\\
&&&+X_{2^2,8,9,11,12}+[2]X_{2^2,3,8,9,11,12}+X_{2^2,3^2,8,9,11,12}+X_{2^2,6,8,9,11,12}+[2]X_{2^2,3,6,8,9,11,12}\\
&&&+X_{2^2,3^2,6,8,9,11,12}+X_{2^2,3^2,8,9,11^2,12}+X_{2^2,3^2,6,8,9,11^2,12}+X_{2^2,3^2,6,7,8,9,11^2,12}\\
&\mapsto_{\mu_7^*}&&X_{7,9,11,12}+[2]X_{2,7,9,11,12}+X_{2^2,7,9,11,12}+[2]X_{2,3,7,9,11,12}+[2]X_{2^2,3,7,9,11,12}&(7)\\
&&&+X_{2^2,3^2,7,9,11,12}+X_{2^2,7,8,9,11,12}+[2]X_{2^2,3,7,8,9,11,12}+X_{2^2,3^2,7,8,9,11,12}\\
&&&+X_{2^2,7^2,8,9,11,12}+[2]X_{2^2,3,7^2,8,9,11,12}+X_{2^2,3^2,7^2,8,9,11,12}+X_{2^2,6,7^2,8,9,11,12}\\
&&&+[2]X_{2^2,3,6,7^2,8,9,11,12}+X_{2^2,3^2,6,7^2,8,9,11,12}+X_{2^2,3^2,7^2,8,9,11^2,12}+X_{2^2,3^2,6,7^2,8,9,11^2,12}\\
&\mapsto_{\mu_3^*\mu_{11}^*\mu_3^*}&&X_{7,9,11,12}+[2]X_{2,7,9,11,12}+X_{2^2,7,9,11,12}+[2]X_{2,3,7,9,11,12}+[2]X_{2^2,3,7,9,11,12}&(8)\\
&&&+X_{2^2,3^2,7,9,11,12}+X_{2^2,7,8,9,11,12}+[2]X_{2^2,3,7,8,9,11,12}+X_{2^2,3^2,7,8,9,11,12}\\
&&&+X_{2^2,3^2,7^2,8,9,11,12}+X_{2^2,3^2,6,7^2,8,9,11,12}\\
&\mapsto_{\mu_2^*\mu_8^*\mu_2^*}&&X_{7,9,11,12}+[2]X_{3,7,9,11,12}+X_{3^2,7,9,11,12}+X_{3^2,7,8,9,11,12}+X_{3^2,7^2,8,9,11,12}+X_{3^2,6,7^2,8,9,11,12}&(9)\\
&\mapsto_{\mu_7^*}&&X_{9,11,12}+[2]X_{3,9,11,12}+X_{3^2,9,11,12}+X_{3^2,8,9,11,12}+X_{3^2,6,8,9,11,12}&(10)\\
&\mapsto_{\mu_6^*}&&X_{9,11,12}+[2]X_{3,9,11,12}+X_{3^2,9,11,12}+X_{3^2,8,9,11,12}&(11)\\
&\mapsto_{\mu_3^*\mu_8^*\mu_3^*}&&X_{9,11,12}&(12)\\
&\mapsto_{\mu_{11}^*}&&X_{9,12}&(13)\\
&\mapsto_{\mu_7^*}&&X_{9,12}
}
}

{\tiny
\Eqn{
X_{4,9,12}&\mapsto_{\mu_2^*\mu_6^*\mu_2^*}&&X_{4,9,12}&(1)\\
&\mapsto_{\mu_3^*\mu_8^*\mu_3^*}&&X_{3,4,9,12}+X_{3^2,4,9,12}+X_{3^2,4,8,9,12}&(2)\\
&\mapsto_{\mu_7^*}&&X_{3,4,9,12}+X_{3^2,4,9,12}+X_{3^2,4,8,9,12}+X_{3^2,4,7,8,9,12}&(3)\\
&\mapsto_{\mu_{11}^*}&&X_{3,4,9,11,12}+X_{3^2,4,9,11,12}+X_{3^2,4,8,9,11,12}+X_{3^2,4,8,9,11^2,12}+X_{3^2,4,7,8,9,11^2,12}&(4)\\
&\mapsto_{\mu_2^*\mu_8^*\mu_2^*}&&X_{2,3,4,9,11,12}+X_{2^2,3,4,9,11,12}+X_{2^2,3^2,4,9,11,12}+X_{2^2,3,4,8,9,11,12}+X_{2^2,3^2,4,8,9,11,12}&(5)\\
&&&+X_{2^2,3^2,4,8,9,11^2,12}+X_{2^2,3^2,4,7,8,9,11^2,12}\\
&\mapsto_{\mu_6^*}&&X_{2,3,4,9,11,12}+X_{2^2,3,4,9,11,12}+X_{2^2,3^2,4,9,11,12}+X_{2^2,3,4,8,9,11,12}+X_{2^2,3^2,4,8,9,11,12}&(6)\\
&&&+X_{2^2,3,4,6,8,9,11,12}+X_{2^2,3^2,4,6,8,9,11,12}+X_{2^2,3^2,4,8,9,11^2,12}+X_{2^2,3^2,4,6,8,9,11^2,12}\\
&&&+X_{2^2,3^2,4,6,7,8,9,11^2,12}\\
&\mapsto_{\mu_7^*}&&X_{2,3,4,7,9,11,12}+X_{2^2,3,4,7,9,11,12}+X_{2^2,3^2,4,7,9,11,12}+X_{2^2,3,4,7,8,9,11,12}+X_{2^2,3^2,4,7,8,9,11,12}&(7)\\
&&&+X_{2^2,3,4,7^2,8,9,11,12}+X_{2^2,3^2,4,7^2,8,9,11,12}+X_{2^2,3,4,6,7^2,8,9,11,12}+X_{2^2,3^2,4,6,7^2,8,9,11,12}\\
&&&+X_{2^2,3^2,4,7^2,8,9,11^2,12}+X_{2^2,3^2,4,6,7^2,8,9,11^2,12}\\
&\mapsto_{\mu_3^*\mu_{11}^*\mu_3^*}&&X_{2,3,4,7,9,11,12}+X_{2^2,3,4,7,9,11,12}+X_{2^2,3^2,4,7,9,11,12}+X_{2^2,3,4,7,8,9,11,12}+X_{2^2,3^2,4,7,8,9,11,12}&(8)\\
&&&+X_{2^2,3^2,4,7^2,8,9,11,12}+X_{2^2,3^2,4,6,7^2,8,9,11,12}\\
&\mapsto_{\mu_2^*\mu_8^*\mu_2^*}&&X_{3,4,7,9,11,12}+X_{3^2,4,7,9,11,12}+X_{3^2,4,7,8,9,11,12}+X_{3^2,4,7^2,8,9,11,12}+X_{3^2,4,6,7^2,8,9,11,12}&(9)\\
&\mapsto_{\mu_7^*}&&X_{3,4,9,11,12}+X_{3^2,4,9,11,12}+X_{3^2,4,8,9,11,12}+X_{3^2,4,6,8,9,11,12}&(10)\\
&\mapsto_{\mu_6^*}&&X_{3,4,9,11,12}+X_{3^2,4,9,11,12}+X_{3^2,4,8,9,11,12}&(11)\\
&\mapsto_{\mu_3^*\mu_8^*\mu_3^*}&&X_{4,9,11,12}&(12)\\
&\mapsto_{\mu_{11}^*}&&X_{4,9,12}&(13)\\
&\mapsto_{\mu_7^*}&&X_{4,9,12}
}
}

This completes the discussion in type $B_3$.

\end{document}